\numberwithin{equation}{section}
\theoremstyle{plain}
\newtheorem{thm}{Theorem}[section]
\newtheorem{prop}[thm]{Proposition}
\newtheorem{cor}[thm]{Corollary}
\theoremstyle{definition}
\newtheorem{rem}[thm]{Remark}
\newcommand{\be}{\begin{equation}}
\newcommand{\ee}{\end{equation}}
\def\R{{\mathbb R}}
\def\D{{\mathbb D}}
\def\N{{\mathbb N}}
\def\C{{\mathbb C}}
\def\S3{{{\mathbb S}^3}}
\def\SU2{{{\rm SU}(2)}}
\def\Rn{{{\mathbb R}^n}}
\def\Gh{{\widehat{G}}}
\def\HS{{\mathtt{HS}}}
\def\p#1{{\left({#1}\right)}}
\def\jp#1{{\left\langle{#1}\right\rangle}}
\def\Dcal{{\mathcal D}}
\def\Lcal{{\mathcal L}}
\DeclareMathOperator{\Tr}{Tr}
\def\Rr{{\mathbb R}}
\def\C{{\mathbb C}}
\def\Rn{{\mathbb R}^n}
\def\R2n{{\mathbb R}^{2n}}
\def\S{{\mathcal S}}
\def\D{{\mathcal D}}
\def\Rr{{\mathbb R}}
\def\Rn{{\mathbb R}^n}
\def\C{{\mathbb C}}
\def\R2{{\mathbb R}^2}
\def\R2n{{\mathbb R}^{2n}}
\def\S{{\mathcal S}}
\def\D{{\mathcal D}}
\def\sumxi{\sum_{[\xi]\in\Gh}}
\def\T1{{\mathbb T}^1}
\def\St{{\mathbb S}^3}
\def\dxi{{d_\xi}}
\def\Cdxi{{\C^{\dxi\times\dxi}}}
\newcommand{\eps}{\varepsilon}
\newcommand{\esp}{\mathrm{e}}
\begin{document}

\title[Wave equation for sums of squares on compact Lie groups]
{Wave equation for sums of squares on compact Lie groups}

\author[Claudia Garetto]{Claudia Garetto}
\address{
  Claudia Garetto:
  \endgraf
  Department of Mathematical Sciences
  \endgraf
  Loughborough University
  \endgraf
  Loughborough, Leicestershire, LE11 3TU
  \endgraf
  United Kingdom
  \endgraf
  {\it E-mail address} {\rm c.garetto@lboro.ac.uk}
  }
\author[Michael Ruzhansky]{Michael Ruzhansky}
\address{
  Michael Ruzhansky:
  \endgraf
  Department of Mathematics
  \endgraf
  Imperial College London
  \endgraf
  180 Queen's Gate, London SW7 2AZ
  \endgraf
  United Kingdom
  \endgraf
  {\it E-mail address} {\rm m.ruzhansky@imperial.ac.uk}
  }

\thanks{The second
 author was supported by the EPSRC
 Leadership Fellowship EP/G007233/1 and by EPSRC Grant EP/K039407/1.}
\date{\today}

\subjclass{35G10, 35L30, 46F05, 46F05, 22E30.}
\keywords{wave equation, sub-Laplacian, sum of squares, well-posedness, Sobolev spaces, Gevrey spaces}

\begin{abstract}
In this paper we investigate the well-posedness of the Cauchy problem for the
wave equation for sums of squares of vector fields on compact Lie groups. 
We obtain the loss of regularity
for solutions to the Cauchy problem in local Sobolev spaces 
depending on the order to which the H\"ormander condition is 
satisfied, but no loss in globally defined spaces. 
We also establish Gevrey well-posedness for equations with
irregular coefficients and/or multiple characteristics. As in the Sobolev spaces,
if formulated in local coordinates,
we observe well-posedness with the loss of local Gevrey order depending on the
order to which the H\"ormander condition is satisfied.
\end{abstract}

\maketitle

\section{Introduction}

In this paper we investigate the well-posedness of the Cauchy problem for 
time-dependent wave equations associated to sums of squares of invariant vector fields 
on compact Lie groups. Such analysis is motivated, in particular, by general investigations 
of the well-posedness and wave propagation governed by subelliptic operators and problems
with multiplicities.
An often encountered example of subelliptic behaviour is a sum of squares of vector fields,
extensively analysed by H\"ormander \cite{hormander_67,Hormander:hypoellipticity-1967},
Oleinik and Radkevich \cite{Oleinik-Radkevich:BK-1973}, Rothschild and Stein
\cite{Rothschild-Stein:AM-1976}, and by many others. For invariant operators on
compact Lie groups, the sum of squares becomes formally self-adjoint,
making the corresponding wave equation hyperbolic, a necessary condition for
the analysis of the corresponding Cauchy problem. Already in this setting,
we discover a new phenomenon of the loss of the local Gevrey regularity for its
solutions. Moreover, this loss is linked to the order to which the
H\"ormander condition is satisfied.

Thus, let $G$ be a compact Lie group
of dimension $n$ with Lie algebra $\mathfrak g$, and let
$X_1,\ldots,X_k$ be a family of left-invariant vector fields in $\mathfrak g$.  
Let 
\begin{equation}\label{EQ:subL}
\Lcal:=X_1^2+\cdots+X_k^2
\end{equation}
be the sum of squares of derivatives defined by the vector fields.
If the iterated commutators of $X_1,\ldots,X_k$ 
span the Lie algebra of $G$,
the operator $\Lcal$ is a sub-Laplacian on $G$,
hypoelliptic in view of H\"ormander's sum of the squares theorem.

With or without the H\"ormander condition,
it can be shown that the operator $\partial_t^2-\Lcal$ is (weakly) hyperbolic
(see Remark \ref{REM:hyp}).
For a continuous function $a=a(t)\geq 0$, we will be concerned with the
Cauchy problem
\begin{equation}\label{CPa}
\left\{ \begin{split}
\partial_{t}^{2}u(t,x)-a(t)\Lcal u(t,x)&=0, \; (t,x)\in [0,T]\times G,\\
u(0,x)&=u_{0}(x), \; x\in G, \\
\partial_{t}u(0,x)&=u_{1}(x), \; x\in G.
\end{split}
\right.
\end{equation}
When localised, the Cauchy problem \eqref{CPa} is a weakly hyperbolic
equation with both time and space dependent coefficients, and the available
results and techniques are rather limited compared to, for example, the situation when
the coefficients depend only on time. 
For example, general Gevrey well-posedness
results of Bronshtein \cite{Bronshtein:TMMO-1980} or Nishitani
\cite{Nishitani:BSM-1983} may apply for some $a$ and $\Lcal$, but in general they
do not take into account the 
geometry of the problem and of the operator $\Lcal$. 

In the case of the Euclidean space $\Rn$, the Cauchy problem for the operator
$\partial_t^2-a(t)\Delta$ with the Laplacian $\Delta$ has been extensively studied.
It is known that the Cauchy problem for this operator may be not well-posed in
$C^\infty(\Rn)$ and in $\Dcal'(\Rn)$ if the function $a(t)$ becomes zero or is irregular,
see, respectively, Colombini and Spagnolo
\cite{Colombini-Spagnolo:Acta-ex-weakly-hyp}, and
Colombini, Jannelli and Spagnolo \cite{Colombini-Jannelli-Spagnolo:Annals-low-reg}.
Thus, Gevrey spaces appear naturally in such well-posedness problems already on $\Rn$,
and for the latter equation, a number of sharp well-posedness results in Gevrey
spaces have been established by Colombini, de Giorgi and Spagnolo
\cite{Colombini-deGiordi-Spagnolo-Pisa-1979}.

Our analysis will cover the case of the Laplacian $\Delta$ on the compact Lie 
group $G$ since we can write it
as $\Lcal=X_1^2+\cdots+ X_n^2$ for a basis $X_1,\ldots,X_n$ of the Lie algebra
of $G$. For different ways of representing Laplacians on compact Lie groups we
refer to an extensive discussion in Stein \cite{Stein:BOOK-topics-Littlewood-Paley}.
In the case of the Laplacian we recover the orders that can be obtained from
the work of Nishitani \cite{Nishitani:BSM-1983} since in this case we can write
$\Lcal$ in local coordinates in the divergence form. For sub-Laplacian $\Lcal$
this no longer applies (neither are the results of Jannelli \cite{Jannelli:JMKU-1984}
because of the lack of divergence form and appearing lower order terms).

Also in the case of the Laplacian, the strictly hyperbolic wave and Schr\"odinger equations 
on compact Lie groups have been recently analysed in the framework of the KAM theory by 
Berti and Procesi \cite{Berti-Procesi:Nonlinear-wave-Sch-DMJ-2011},
with further additions of nonlinear terms. We can refer to their paper and references therein,
as well as to Helgason \cite{Helgason:wave-eqns-hom-spaces-1984},
for a thorough explanation of the appearance of these partial differential 
equations on compact Lie groups, and the need to study them contributed greatly 
to the development of the modern theory of
compact Lie groups, starting with Weyl \cite{Weyl:origins}.
$L^p$-estimates for wave equations have been considered on Lie groups as well.
Here, in the case of the Laplacian on compact Lie groups,
the loss of regularity in $L^p$ has been obtained by Chen, Fan and Sun 
\cite{Chen-Fan-Sung:WE-cpt-Lie-gps-JFA-2010}; however, for $p>1$, this loss
can be also deduced from the localised $L^p$-estimates for Fourier integral
operators by Seeger, Sogge and Stein \cite{SSS:1991}.
In turn, different techniques are required for the wave equation with sub-Laplacians,
see e.g. the case of the standard sub-Laplacian on the Heisenberg group
by M\"uller and Stein \cite{Muller-Stein:Lp-wave-Heis}.
The finite propagation speed results for wave equations for subelliptic operators
are also known in different related settings: analysis for abstract operators was developed by
Melrose \cite{Melrose:wave-subelliptic-1986}, with explicit
formulae for the wave kernels on the Heisenberg 
group obtained by Taylor 
\cite{Taylor:BK-Noncomm-harmonic-analysis} and Nachman
\cite{Nachman:wave-Heisenberg-CPDE-1982} and,
more recently, by Greiner, Holcman and Kannai
\cite{Greiner-et-al:WE-second-order-DMJ-2002}.

The point of this paper is that by applying
the global Fourier analysis on $G$ to the Cauchy problem \eqref{CPa} we can 
view it as an equation with coefficients depending only on $t$, leading to a range 
of sharp results depending on further properties of the function $a(t)$. However,
since the global Fourier coefficients of functions on $G$ become matrix valued, 
on the Fourier transform side the scalar equation \eqref{CPa} becomes a system,
with the size of the system going to infinity with the dimension of representations, 
unless $G$ is a torus.
An important observation enabling our analysis is that we can explore the sum of squares
structure of the operator $\Lcal$ using a notion of a matrix symbol for operators
on compact Lie groups. Thus, we will show that the system for Fourier coefficients
decouples completely into independent scalar equations for the matrix components
of Fourier coefficients. The equations are determined by the entries of the matrix
symbol of $\Lcal$ which we also study for this purpose, in particular establishing
lower bounds for its eigenvalues in terms of the order to which the H\"ormander
condition is satisfied (in the case when it is indeed satisfied).

Our results will apply to general operators $\Lcal$ of the form \eqref{EQ:subL}
without $X_1,\ldots,X_k$ necessarily
satisfying the H\"ormander condition. However, if the H\"ormander 
condition is satisfied, the well-posedness of \eqref{CPa} in $C^\infty(G)$,  
$\Dcal'(G)$, or usual Gevrey spaces on $G$ viewed as a manifold will follow.
Moreover, such well-posedness statements can be refined with respect to the loss of
regularity and the orders
of appearing Sobolev or Gevrey spaces if we know also the order to which the
H\"ormander condition is satisfied. To our knowledge, this phenomenon of local loss
of Gevrey regularity appears
to be new in the study of weakly hyperbolic equations.

Let us give an example of such an equation \eqref{CPa} on the 3-sphere $G=\St$. 
Here, if $X,Y,Z$ are an orthonormal basis (with respect to the Killing form)
of left-invariant vector fields on $\St$, then we can set $\Lcal$ to be the sub-Laplacian
\begin{equation}\label{EQ:subLap}
\Lcal:=\Lcal_{\St, sub}:=X^{2}+Y^{2}.
\end{equation}
Here, we can view the 3-sphere $\St$ as a Lie group with respect to the quaternionic 
product of $\Rr^{4}$, and note that it is globally diffeomorphic and isomorphic to the
group $\SU2$ of unitary $2\times 2$ matrices of determinant one,
with the usual matrix product.
We also note that in 
Euler's angles $(\phi,\psi,\theta)$ the sub-Laplacian $\Lcal_{\St, sub}$ 
has the form
$$
\Lcal_{\St, sub} =\frac{1}{\sin^2\theta}\partial_\phi^2-
2\frac{\cos\theta}{\sin^2\theta}\partial_\phi\partial_\psi+
\left(\frac{1}{\sin^2\theta}-1\right)\partial_\psi^2+
\partial_\theta^2+\frac{\cos\theta}{\sin\theta}\partial_\theta,
$$
see e.g. \cite[Section 11.9]{Ruzhansky-Turunen:BOOK}, where we can take the almost injective
range for Euler angles $0\leq\phi<2\pi$, $0<\theta<\pi$, $-2\pi\leq\psi<2\pi$
(see \cite[Section 11.3]{Ruzhansky-Turunen:BOOK}).
Denoting by $\eta$ the dual variables to Euler's angles,
we can see that the principal symbol of $\Lcal_{\St, sub}$
in these coordinates is $\frac{1}{\sin^2\theta} (\eta_1-\cos \theta\, \eta_2)^2+\eta_3^2$
so that the equation \eqref{CPa} is weakly hyperbolic, with multiplicities on the set
$\eta_1=\cos\theta\, \eta_2$, $\eta_3=0$, even if $a(t)\equiv 1$.

Other examples of sub-Laplacians of different steps can be constructed from
the lists of roots systems (see e.g. Fegan \cite[Chapter 8]{Fegan:bk-compact-Lie-groups}),
although there are certain limitations on possible root strings,
see e.g. Knapp \cite[Section II.5]{Knapp:bk-Lie-groups-beyond-intro}.

The paper is organised as follows. In Section \ref{SEC:results}
we will formulate our results. In Section \ref{SEC:Prelim}
we will set the notation for our approach and will establish properties of
matrix symbols and Sobolev spaces associated to sub-Laplacians.
In Section \ref{SEC:reduction} we will give proofs of our results.

The authors would like to thank V\'eronique Fischer for stimulating discussions
and Ferruccio Colombini for comments.

\section{Main results}
\label{SEC:results}

Thus, for this paper, as in the introduction, we let $G$ be a compact Lie group
and let $X_1,\ldots,X_k$ be a family of left-invariant vector fields in $\mathfrak g$.  
Then we fix the operator $\Lcal$ as in \eqref{EQ:subL}.

In our results below, concerning the Cauchy problem \eqref{CPa}, we will aim at
carrying out comprehensive analysis and distinguish between the following cases:
\begin{itemize}
\item[] {\bf Case 1:} $a(t)\ge a_0>0$, $a\in C^1([0,T])$;
\item[] {\bf Case 2:} $a(t)\ge a_0>0$, $a\in C^\alpha([0,T])$, with $0<\alpha<1$;
\item[] {\bf Case 3:} $a(t)\ge 0$, $a\in C^\ell([0,T])$ with $\ell\ge 2$;
\item[] {\bf Case 4:} $a(t)\ge 0$, $a\in C^\alpha([0,T])$, with $0<\alpha<2$.
\end{itemize}
Thus, Case 1 is the regular time-non-degenerate case when we obtain the well-posedness
in Sobolev spaces associated to the operator $\Lcal$. If $\Lcal$ is hypoelliptic with
H\"ormander condition satisfied to order $r$ we show the loss in local regularity
depending on $r$. Case 2 is devoted to non-zero $a(t)$ but allowing it to be of
H\"older regularity $\alpha$ only. Cases 3 and 4 are devoted to the situation when
there may be also degeneracies with respect to $t$, in both situations when $a(t)$
is regular and not. The threshold $\alpha=2$ is natural from the point of view that 
in general, if $a\in C^\alpha([0,T])$ with $0<\alpha<2$, the characteristic roots
are in H\"older spaces $C^{\frac{\alpha}{2}}([0,T])$ with $0<\frac{\alpha}{2}<1$,
thus providing a similar setting to that in Case 2. Thus, the proofs in Cases 2 and 4
will be similar and based on the regularisation and separation of characteristic
roots. Case 3 will rely on construction of a quasi-symmetriser, while in Case 1 
a symmetriser will suffice.

For any $s\in\Rr$, we define Sobolev spaces $H^s_\Lcal(G)$ associated to 
$\Lcal$ by 
\begin{equation}\label{EQ:HsL0}
H^s_\Lcal(G):=\left\{ f\in\Dcal'(G): (I-\Lcal)^{s/2}f\in L^2(G)\right\},
\end{equation}
with the norm $\|f\|_{H^s_\Lcal}:=\|(I-\Lcal)^{s/2}f\|_{L^2}.$ 
At this moment, we note that the formal self-adjointness makes these Sobolev
spaces well defined in our setting, e.g. through the Plancherel formula on the Fourier
transform side as in \eqref{EQ:Hsub-norm}.

If $\Lcal$ is a Laplacian, e.g. if $\Lcal=X_1^2+\cdots+ X_n^2$ for
a basis of vector fields in $\mathfrak g$,
we will omit the 
subscript and simply write $H^s(G)$ in this case. Since the Laplacian
is elliptic, the spaces $H^s$ coincide with the usual Sobolev space on
$G$ considered as a smooth manifold. In Section \ref{SEC:Prelim}
we will analyse the main relevant properties of these spaces.

Let us now formulate the corresponding results. 
The first result deals with strictly positive and regular
propagation speed $a(t)$.

\begin{thm}[Case 1]
\label{theo_case_1}
Assume that $a\in C^1([0,T])$ and that $a(t)\ge a_0>0$.
For any $s\in\Rr$, if the Cauchy data satisfy
$(u_0,u_1)\in {H}^{1+s}_\Lcal \times {H}^{s}_\Lcal$,
then the Cauchy problem \eqref{CPa} has a unique solution 
$u\in C([0,T],{H}^{1+s}_\Lcal) \cap
C^1([0,T],{H}^{s}_\Lcal)$ which satisfies the estimate
\begin{equation}
\label{case_1_last-est}
\|u(t,\cdot)\|_{{H}^{1+s}_\Lcal}^2+\| \partial_t u(t,\cdot)\|_{{H}^s_\Lcal}^2\leq
C (\| u_0\|_{{H}^{1+s}_\Lcal}^2+\|u_1\|_{{H}^{s}_\Lcal}^2).
\end{equation}
Furthermore, suppose that the vector fields $X_1,\ldots,X_k$ satisfy H\"ormander condition of 
order $r$, i.e. that their iterated commutators of length $\leq r$ span the Lie algebra of $G$. 
Then the Cauchy problem \eqref{CPa} is well-posed in $C^\infty(G)$ and in $\Dcal'(G)$.
Moreover, for any $s\geq 0$, we have the estimate in the usual Sobolev spaces:
\begin{equation}
\label{case_1_last-SOB}
\|u(t,\cdot)\|_{{H}^{(1+s)/r}}^2+\| \partial_t u(t,\cdot)\|_{{H}^{s/r}}^2\leq
C (\| u_0\|_{{H}^{1+s}}^2+\|u_1\|_{{H}^{s}}^2).
\end{equation}
\end{thm}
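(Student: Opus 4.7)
The plan is to apply the group Fourier transform to \eqref{CPa}, use the matrix symbol $\sigma_\Lcal(\xi)$ of $\Lcal$ to decouple the resulting matrix ODE into scalar ODEs, run a symmetriser energy estimate uniformly in $[\xi]\in\Gh$, and finally compare $H^s_\Lcal$ with the usual Sobolev spaces through a spectral bound coming from the H\"ormander condition. Since $\Lcal$ is left-invariant and formally self-adjoint with $-\Lcal\geq 0$, the matrix symbol $\sigma_\Lcal(\xi)\in\Cdxi$ is Hermitian and $-\sigma_\Lcal(\xi)$ is positive semidefinite, so one can pick a unitary $U_\xi\in\Cdxi$ diagonalising it with non-negative eigenvalues $\nu_1(\xi)^2,\ldots,\nu_{\dxi}(\xi)^2$. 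Changing basis on the Fourier side by $U_\xi$ turns the matrix ODE for $\widehat u(t,\xi)$ into the family of \emph{scalar} Cauchy problems
$$\partial_t^2 w(t)+a(t)\,\nu(\xi)^2\,w(t)=0,$$
one for each matrix entry of the transformed Fourier coefficient and each eigenvalue $\nu^2$ of $-\sigma_\Lcal(\xi)$, the initial data coming from the corresponding entries of $\widehat u_0(\xi)$ and $\widehat u_1(\xi)$ after the change of basis.

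For such a scalar ODE and any $s\in\Rr$, I would introduce the weighted symmetriser energy
$$E_s(t;\xi):=(1+\nu(\xi)^2)^s\br{\abs{\partial_t w(t)}^2+\p{1+a(t)\nu(\xi)^2}\abs{w(t)}^2},$$
differentiate, and use the ODE to eliminate $\partial_t^2 w$: this cancels the $\pm 2a(t)\nu^2\,\Repa(\overline{w}\,\partial_t w)$ contributions and leaves $\partial_t E_s=(1+\nu^2)^s\br{a'(t)\nu^2\abs{w}^2+2\,\Repa(w\overline{\partial_t w})}$. The hypotheses $a\geq a_0>0$ and $a\in C^1([0,T])$ yield $\abs{\partial_t E_s}\leq C E_s$ with $C=C(a_0,\|a'\|_{L^\infty})$ independent of $\xi$ and of the entry, and Gronwall gives $E_s(t)\leq e^{CT}E_s(0)$. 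Summing over $[\xi]\in\Gh$ and over matrix entries with the Plancherel weight $\dxi$, the unitary $U_\xi$ drops out and, using $1+a(t)\nu^2\asymp 1+\nu^2$ uniformly in $t$, the summed energy is comparable to $\|u(t,\cdot)\|^2_{H^{1+s}_\Lcal}+\|\partial_t u(t,\cdot)\|^2_{H^s_\Lcal}$; this delivers \eqref{case_1_last-est}. Existence of $u$ in the stated regularity class, uniqueness, and continuity in $t$ follow in the standard way from solving each scalar Cauchy problem and reassembling via the inverse Fourier transform.

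For the second half I combine \eqref{case_1_last-est} with the comparison between $H^s_\Lcal(G)$ and the usual Sobolev spaces $H^s(G)$. The key input is the spectral lower bound
$$\nu(\xi)^2\gtrsim \jp{\xi}^{2/r}\qquad\text{uniformly in }[\xi]\in\Ghs$$
for every eigenvalue $\nu^2$ of $-\sigma_\Lcal(\xi)$, that I expect to establish in Section \ref{SEC:Prelim} as a symbolic reformulation of H\"ormander's subelliptic estimate at order $r$. Combined with the trivial upper bound $\nu(\xi)^2\leq \jp{\xi}^2$, this yields the continuous embeddings $H^s(G)\hookrightarrow H^s_\Lcal(G)\hookrightarrow H^{s/r}(G)$ for every $s\geq 0$, and inserting them on the two sides of \eqref{case_1_last-est} converts it directly into \eqref{case_1_last-SOB}. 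Well-posedness in $C^\infty(G)=\bigcap_s H^s(G)$ follows by letting $s$ be arbitrarily large; well-posedness in $\Dcal'(G)$ follows either by duality or by repeating the same energy argument for $s<0$, since the cancellation producing $\abs{\partial_t E_s}\leq CE_s$ is valid for every $s\in\Rr$.

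The symmetriser calculation above is routine for a single scalar weakly hyperbolic ODE with a $C^1$, non-degenerate coefficient and is not where the difficulty lies. The main obstacle will be the preparatory material of Section \ref{SEC:Prelim}: setting up the matrix-symbol framework carefully enough that \eqref{CPa} genuinely decouples on the Fourier side with constants uniform across all representations, and in particular proving the quantitative spectral bound $\nu(\xi)^2\gtrsim\jp{\xi}^{2/r}$, which is precisely what turns H\"ormander hypoellipticity of $\Lcal$ of order $r$ into the $r$-dependent loss of derivatives in \eqref{case_1_last-SOB}.
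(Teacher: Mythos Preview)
Your proposal is correct and follows essentially the same route as the paper: Fourier transform, diagonalise $\sigma_{-\Lcal}(\xi)$ to decouple into scalar ODEs, run a symmetriser energy estimate with Gronwall, sum via Plancherel, and then invoke the spectral bound $\nu_j(\xi)\gtrsim\jp{\xi}^{1/r}$ (Proposition~\ref{PROP:subL}) for the H\"ormander part. The only cosmetic difference is that the paper first reduces each scalar ODE to a $2\times2$ first-order system and uses the matrix symmetriser $S(t)=\mathrm{diag}(2a(t),2)$, whereas you work directly with the second-order scalar energy $(1+\nu^2)^s\bigl[|\partial_t w|^2+(1+a(t)\nu^2)|w|^2\bigr]$; the two formulations are equivalent here.
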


We then deal with the situations when the function $a(t)$ may become zero or when it is less
regular than $C^1$. In this case, already for elliptic $\Lcal$ (for example, $\Lcal$ being the Laplacian),
we can not expect the well-posedness in $C^\infty$ on in $\Dcal'$, 
by an adaptation of results in \cite{Colombini-Spagnolo:Acta-ex-weakly-hyp} and
\cite{Colombini-Jannelli-Spagnolo:Annals-low-reg}.
However, it would hold in Gevrey spaces but the appearing Gevrey space may depend 
on the operator $\Lcal$. 

Thus, for $0<s<\infty$,
we define the $\Lcal$-Gevrey
space $\gamma^s_\Lcal(G)\subset C^\infty(G)$ by
\begin{equation}\label{DEF:GevL}
f\in \gamma^s_\Lcal(G) \Longleftrightarrow
\exists A>0: 
\| \esp^{A(-\Lcal)^{\frac{1}{2s}}} f\|_{L^2(G)} <\infty.
\end{equation}
The expression on the right is well-defined, for example in the sense of semi-groups,
since the operator $-\Lcal$ is formally self-adjoint and positive.
It can be also easily understood on the Fourier transform side, see \eqref{DEF:GevL-2}.
The first part of the following proposition justifies the terminology.
\begin{prop}\label{PROP:Gevrey}
We have the following properties.
\begin{itemize}
\item[(i)] If $\Lcal=X_1^2+\cdots+X_n^2$ 
is the Laplacian on $G$ then for $1\leq s<\infty$, the space $\gamma^s_\Lcal(G)$ in local coordinates coincides 
with the usual Gevrey space $\gamma^s(\Rn)$, i.e. the space of smooth functions $\psi\in C^\infty(\Rn)$
for which there exist constants $C>0$ and $A>0$ such that
$$
|\partial^\alpha\psi(x)|\leq C A^{|\alpha|} (\alpha!)^{s}.
$$
In the case of the Laplacian $\Lcal$, we denote the space $\gamma^s_\Lcal(G)$ simply
by $\gamma^s(G)$ dropping the subscript $\Lcal$.
\item[(ii)] If $\Lcal=X_1^2+\cdots+X_k^2$ with $X_1,\ldots,X_k$ satisfying the H\"ormander condition
of order $r$, i.e. their iterated commutations of length $\leq r$ span the Lie algebra of $G$, then
we have the continuous inclusions
$$
\gamma^s(G)\subset \gamma^s_\Lcal(G) \subset \gamma^{rs}(G).
$$
\end{itemize}
\end{prop}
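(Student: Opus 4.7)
The plan is to reduce both parts to the following \emph{integer characterisation}: $f\in\gamma^s_\Lcal(G)$ if and only if there exist $C,B>0$ such that
\begin{equation*}
\|\Lcal^k f\|_{L^2(G)}\leq C B^k (k!)^{2s}\quad\text{for every }k\in\N.
\end{equation*}
The forward direction follows by applying the pointwise spectral inequality $e^{2A\lambda^{1/(2s)}}\geq (2A)^{4sk}\lambda^{2k}/(4sk)!$ on the spectrum of $-\Lcal$, together with the Stirling estimate $(4sk)!\leq C_0 E^k(k!)^{4s}$. For the converse, the integer bound gives, for each spectral component with eigenvalue $\lambda$, a pointwise estimate of the form $|\cdot|^2\leq C^2 B^{2k}(k!)^{4s}/\lambda^{2k}$; optimising in $k$ (taking $k\asymp\lambda^{1/(2s)}$) yields decay of the form $e^{-c_0\lambda^{1/(2s)}}$, after which the polynomial growth of the spectral counting function of $-\Lcal$ makes the resulting exponential series converge.

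For part~(i), since $\Lcal=X_1^2+\cdots+X_n^2$ is a second-order elliptic operator with real-analytic coefficients, classical elliptic regularity on a compact manifold yields
\begin{equation*}
|\partial^\alpha f|\leq CA^{|\alpha|}(\alpha!)^s\ \forall\alpha\iff \|\Lcal^k f\|_{L^2}\leq C'A'^k((2k)!)^s\ \forall k.
\end{equation*}
Using the Stirling asymptotic $((2k)!)^s\asymp 4^{ks}(k!)^{2s}$, the right-hand side is precisely the integer characterisation for $\Lcal=\Delta$, so $\gamma^s_\Lcal(G)=\gamma^s(G)$.

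For part~(ii), the key tool is the matrix-valued symbol $\sigma_\Lcal(\xi)\in\Cdxi$ of the left-invariant operator $\Lcal$; via the Plancherel identity,
\begin{equation*}
\|(I-\Lcal)^k f\|_{L^2}^2=\sum_{[\xi]\in\Gh}\dxi\,\|(I+\sigma_{-\Lcal}(\xi))^k\hat f(\xi)\|_{\HS}^2.
\end{equation*}
Since $I+\sigma_{-\Lcal}(\xi)$ is positive Hermitian, its $k$-th-power action on each $\hat f(\xi)$ is sandwiched between $\mu_{\min}(\xi)^{2k}\|\hat f(\xi)\|_{\HS}^2$ and $\mu_{\max}(\xi)^{2k}\|\hat f(\xi)\|_{\HS}^2$. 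Under H\"ormander's condition of order $r$, the matrix-symbol analysis developed in Section~\ref{SEC:Prelim} gives the uniform eigenvalue bounds
\begin{equation*}
c\lara{\xi}^{2/r}\leq\mu_{\min}(\xi)\leq\mu_{\max}(\xi)\leq C\lara{\xi}^2,
\end{equation*}
from which I would deduce
\begin{equation*}
c^k\|(I-\Delta)^{k/r}f\|_{L^2}\leq\|(I-\Lcal)^k f\|_{L^2}\leq C^k\|(I-\Delta)^k f\|_{L^2}.
\end{equation*}
Feeding this into the integer characterisation, the right inequality gives $\gamma^s(G)\subseteq\gamma^s_\Lcal(G)$, while the left inequality (after substituting $k\mapsto rk$ and applying the Stirling bound $((rk)!)^{2s}\leq C_1^k(k!)^{2rs}$) gives $\gamma^s_\Lcal(G)\subseteq\gamma^{rs}(G)$.

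The principal obstacle is the lower eigenvalue bound $\mu_{\min}(\xi)\gtrsim\lara{\xi}^{2/r}$: this is the Fourier/representation-theoretic reformulation of the Rothschild--Stein subelliptic estimate of order $r$, and it is precisely this step where the order to which H\"ormander's condition is satisfied enters the argument. All the remaining steps are elementary consequences of spectral theory on a compact Lie group combined with Stirling arithmetic.
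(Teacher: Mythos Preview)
Your argument is essentially correct, but it takes a substantially longer route than the paper does. The paper proves Part~(ii) in one line: once the Fourier-side description
\[
f\in\gamma^s_\Lcal(G)\;\Longleftrightarrow\;\exists A>0:\ \sumxi\dxi\sum_{j,m}\esp^{A\nu_j(\xi)^{1/s}}|\widehat f(\xi)_{jm}|^2<\infty
\]
from \eqref{DEF:GevL-2} is available, the eigenvalue bounds $c\jp{\xi}^{1/r}\leq\nu_j(\xi)+1\leq\sqrt{2}\jp{\xi}$ of Proposition~\ref{PROP:subL} let you compare the exponential weights $\esp^{A\nu_j(\xi)^{1/s}}$ and $\esp^{A'\jp{\xi}^{1/s}}$ (resp.\ $\esp^{A''\jp{\xi}^{1/(rs)}}$) directly, yielding both inclusions at once. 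No integer-power characterisation, no Stirling arithmetic, no optimisation in $k$ is needed. For Part~(i) the paper simply cites \cite{Dasgupta-Ruzhansky:Gevrey-arxiv}.

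What your detour buys is a self-contained treatment of Part~(i) (via elliptic regularity rather than an external reference) and a characterisation of $\gamma^s_\Lcal$ that is sometimes handier in applications. Two small cautions, though. First, in your converse step for the integer characterisation you deduce a pointwise bound $d_\xi|\widehat f(\xi)_{jm}|^2\leq C^2B^{2k}(k!)^{4s}\nu_j(\xi)^{-4k}$ and then sum $\esp^{(2A-c_0)\nu_j^{1/s}}$ over all $(\xi,j,m)$; convergence of that bare sum genuinely uses polynomial growth of the spectral counting function of $-\Lcal$, which holds for the Laplacian and for hypoelliptic $\Lcal$ but not for an arbitrary sum of squares. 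A cleaner way (which works for any $\Lcal$) is to expand $\esp^{2A\nu_j^{1/s}}$ as a power series and use Fubini together with the moment bounds, avoiding the counting-function input. Second, in the forward step your inequality $\esp^{2A\lambda^{1/(2s)}}\geq(2A)^{4sk}\lambda^{2k}/(4sk)!$ needs $4sk\in\N$; handle this by taking $N=\lceil 4sk\rceil$ or by the standard $\sup_\lambda\lambda^{2k}\esp^{-2A\lambda^{1/(2s)}}$ computation.
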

We note that Part (i) of Proposition \ref{PROP:Gevrey} has been proved in
\cite{Dasgupta-Ruzhansky:Gevrey-arxiv}. The continuous embeddings in Part (ii)
follow from the formula \eqref{DEF:GevL-2} and estimates
\eqref{EQ:nu-ests} in Proposition \ref{PROP:subL}. We also note that especially 
for $s\geq 1$, dropping the subscript $\Lcal$ in the notation in the case of 
Laplacians should not cause problems since in this case the space coincides
with the usual Gevrey space on manifolds, as stated in 
Part (i) of Proposition \ref{PROP:Gevrey}.

We formulate the well-posedness of the Cauchy problem \eqref{CPa} in the Gevrey spaces
$\gamma^s_\Lcal$. If the H\"ormander condition is satisfied, the embeddings
in Part (ii) of Proposition \ref{PROP:Gevrey}, in view of Part (i) of Proposition \ref{PROP:Gevrey},
yield a well-posedness results in the Gevrey spaces $\gamma^s$, or in the usual $\gamma^s(\Rn)$ in
local coordinates, provided all the Gevrey indices are $\geq 1$. 
However, the well-posedness formulated in $\gamma^s_\Lcal$ is a more refined
statement since the space $\gamma^s_\Lcal$ is in general bigger than $\gamma^s$,
or maybe unrelated to it if the H\"ormander condition is not satisfied.

\begin{thm}[Case 2]
\label{theo_case_2}
Assume that $a(t)\ge a_0>0$ and that $a\in C^\alpha([0,T])$ with $0<\alpha<1$. Then
for initial data $u_0,u_1\in \gamma^s_\Lcal(G)$, the Cauchy problem \eqref{CPa} has 
a unique solution $u\in C^2([0,T], \gamma^s_\Lcal(G))$, provided that
\begin{equation}\label{EQ:Case2-s}
1\le s<1+\frac{\alpha}{1-\alpha}.
\end{equation}
Furthermore, suppose that the vector fields $X_1,\ldots,X_k$ satisfy H\"ormander condition of 
order $r$, i.e. that their iterated commutators of length $\leq r$ span the Lie algebra of $G$. 
Then, in particular, for initial data $u_0,u_1\in \gamma^s(G)$ with $s$ satisfying
\eqref{EQ:Case2-s}, the Cauchy problem \eqref{CPa} has 
a unique solution $u\in C^2([0,T], \gamma^{rs}(G))$.
\end{thm}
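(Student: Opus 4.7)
The plan is to reduce \eqref{CPa} to a family of decoupled scalar second-order ODEs via the group Fourier transform on $G$, and then to handle each ODE by the Colombini--de Giorgi--Spagnolo regularisation technique, stitching the mode-by-mode estimates back together via the Plancherel formula. First I would pass to the Fourier side: since $\Lcal$ is left-invariant and formally self-adjoint, its matrix symbol $\sigma_\Lcal(\xi)$ is Hermitian with $-\sigma_\Lcal(\xi)\ge 0$ at every $[\xi]\in\Gh$, so a unitary change of basis $w(t,\xi)=U_\xi^{\ast}\widehat u(t,\xi)$ diagonalises it and turns \eqref{CPa} into a family of uncoupled scalar ODEs
\begin{equation*}
\partial_t^2 w_\nu(t)+a(t)\nu^2\, w_\nu(t)=0,\qquad w_\nu(0)=w_{0,\nu},\quad \partial_t w_\nu(0)=w_{1,\nu},
\end{equation*}
indexed by $\nu\ge 0$, where $\nu^2$ ranges over the eigenvalues of $-\sigma_\Lcal(\xi)$ with multiplicities. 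This is the structural observation announced in the introduction, and it lets the rest of the analysis be carried out one scalar mode at a time.

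For each mode with $\nu\ge 1$ I would fix a symmetric mollifier $\rho_\eps$ and set $a_\eps:=a*\rho_\eps$, so that $|a-a_\eps|\le C\eps^\alpha$, $|a_\eps'|\le C\eps^{\alpha-1}$, and $a_\eps\ge a_0$. Defining the approximate energy $E_\eps(t):=|w_\nu'(t)|^2+a_\eps(t)\nu^2|w_\nu(t)|^2$ and using the ODE, the dominant cross term cancels and
\begin{equation*}
E_\eps'(t)=2\bigl(a_\eps(t)-a(t)\bigr)\nu^2\,\mathrm{Re}\bigl(w_\nu(t)\overline{w_\nu'(t)}\bigr)+a_\eps'(t)\,\nu^2|w_\nu(t)|^2.
\end{equation*}
Estimating $\nu^2|w_\nu||w_\nu'|\le C\nu E_\eps$ and $\nu^2|w_\nu|^2\le E_\eps/a_0$ gives $E_\eps'\le C(\eps^\alpha\nu+\eps^{\alpha-1})E_\eps$; the balancing choice $\eps=\nu^{-1}$ produces $E_\eps'\le C\nu^{1-\alpha}E_\eps$. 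Gronwall, together with $E_\eps\simeq|w_\nu'|^2+\nu^2|w_\nu|^2$ (which holds as soon as $\nu$ is large enough that $|a-a_\eps|\le a_0/2$), then yields
\begin{equation*}
|w_\nu'(t)|^2+\nu^2|w_\nu(t)|^2\le C\exp\!\bigl(CT\nu^{1-\alpha}\bigr)\bigl(|w_{1,\nu}|^2+\nu^2|w_{0,\nu}|^2\bigr).
\end{equation*}
The finitely many low-frequency modes with $\nu<1$ are handled by a direct Gronwall estimate without regularisation.

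Finally I would pass back to the group. By the definition \eqref{DEF:GevL}, the Fourier coefficients of data in $\gamma^s_\Lcal(G)$ decay like $\exp(-A_0\nu^{1/s})$ for some $A_0>0$, and the estimate above shows that the propagated coefficients decay like $\exp\bigl(-A_0\nu^{1/s}+CT\nu^{1-\alpha}\bigr)$; this is still of type $\exp(-A\nu^{1/s})$ for some $A>0$ precisely when $1/s>1-\alpha$, i.e.\ when $s<1+\alpha/(1-\alpha)$. Plancherel then gives $u,\partial_t u\in C([0,T],\gamma^s_\Lcal(G))$ in that range, and the equation $\partial_t^2 u=a(t)\Lcal u$ together with the boundedness of $\Lcal$ on $\gamma^s_\Lcal(G)$ upgrades this to $u\in C^2([0,T],\gamma^s_\Lcal(G))$. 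The second half of the theorem then follows directly from Proposition \ref{PROP:Gevrey}(ii): data in $\gamma^s(G)\subset\gamma^s_\Lcal(G)$ propagate inside $\gamma^s_\Lcal(G)\subset\gamma^{rs}(G)$. The main obstacle is the delicate balance between the H\"older loss in $t$ and the Fourier variable $\nu$; the reduction to genuinely scalar ODEs in the first step is what makes this balance tractable, and the freedom to choose $\eps$ as a function of $\nu$ is exactly what fixes the Gevrey threshold \eqref{EQ:Case2-s}.
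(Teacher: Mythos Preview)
Your proof is correct and shares the overall architecture of the paper's argument---Fourier reduction to decoupled scalar ODEs, a parameter-dependent energy, the balancing choice $\eps=\nu^{-1}$, and reassembly via Plancherel---but at the ODE level you take a different and more elementary route. The paper regularises the \emph{characteristic roots} $\pm\sqrt{a}$ and diagonalises the first-order system by the matrix $H=\bigl(\begin{smallmatrix}1&1\\ \lambda_1&\lambda_2\end{smallmatrix}\bigr)$ with $\lambda_j=(\mp\sqrt{a})*\varphi_\eps$, then estimates the transformed variable $W$ through the quantities $\partial_t\det H/\det H$, $\|H^{-1}\partial_t H\|$ and $\|H^{-1}AH-(H^{-1}AH)^\ast\|$; this is the method of \cite{GR:11}. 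You instead regularise the \emph{coefficient} $a$ directly and work with the scalar approximate energy $E_\eps=|w'|^2+a_\eps\nu^2|w|^2$, which is the original Colombini--de~Giorgi--Spagnolo device. Both yield $E'\le C(\eps^\alpha\nu+\eps^{\alpha-1})E$ and hence the same Gevrey threshold $1/s>1-\alpha$. Your approach is shorter here because $a\ge a_0>0$ makes $E_\eps$ uniformly coercive without any further work; the paper's diagonalisation is set up so that it carries over to Case~4, where the roots may collide and one needs the artificial separation $\lambda_2-\lambda_1\ge\eps^\alpha$ built into $H$ rather than a lower bound on $a$.

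One small correction: in the first part of the theorem the H\"ormander condition is not assumed, so the set of modes with $\nu<1$ need not be finite. This is harmless, since your ``direct Gronwall estimate without regularisation'' gives a bound uniform in $\nu<1$, but the word ``finitely'' should be dropped.
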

The second part of Theorem \ref{theo_case_2} follows from the first one in view of
the embeddings in Proposition \ref{PROP:Gevrey}, Part (ii).
By Proposition \ref{PROP:Gevrey}, Part (i), for $s\geq 1$,
the spaces $\gamma^{s}(G)$ and $\gamma^{rs}(G)$ can be identified
with Gevrey spaces $\gamma^{s}(\Rn)$ and $\gamma^{rs}(\Rn)$ in local coordinates,
repectively.
Consequently, we obtain the local version of Theorem \ref{theo_case_2} with loss of
Gevrey regularity in local coordinates:

\begin{cor}\label{COR:case-2}
Assume that the vector fields $X_1,\ldots,X_k$ satisfy H\"ormander condition of 
order $r$. Assume further that 
$a(t)\ge a_0>0$ and that $a\in C^\alpha([0,T])$ with $$0<\alpha<1.$$
Let the initial data $u_0,u_1$ belong to $\gamma^s(\Rn)$ in any local coordinate chart,
for
$$
1 \leq s<1+\frac{\alpha}{1-\alpha}.
$$
Then the Cauchy problem \eqref{CPa} has 
a unique solution $u$ such that $u(t,\cdot)$ belongs to $\gamma^{rs}(\Rn)$
in every local coordinate chart.
\end{cor}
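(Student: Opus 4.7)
The plan is to obtain Corollary \ref{COR:case-2} as a direct combination of Theorem \ref{theo_case_2} with Proposition \ref{PROP:Gevrey}, using the latter to translate between the $\Lcal$-adapted Gevrey scale and the intrinsic Gevrey scale on $G$ viewed as a smooth manifold.

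First, I would observe that $G$ is a compact manifold, so membership of $u_0,u_1$ in $\gamma^s(\Rn)$ in every local coordinate chart is equivalent, via a partition of unity argument, to $u_0,u_1\in\gamma^s(G)$ in the intrinsic sense. For $s\ge 1$ this last space is, by Part (i) of Proposition \ref{PROP:Gevrey}, exactly the Gevrey space of order $s$ on the manifold. Hence the hypothesis of the corollary is equivalent to $u_0,u_1\in\gamma^s(G)$ with $s$ in the range specified in \eqref{EQ:Case2-s}.

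Next, I would invoke the first inclusion in Part (ii) of Proposition \ref{PROP:Gevrey}, which under the H\"ormander condition of order $r$ gives $\gamma^s(G)\subset\gamma^s_\Lcal(G)$ continuously. Thus $u_0,u_1\in\gamma^s_\Lcal(G)$, and Theorem \ref{theo_case_2} is directly applicable: since $a(t)\ge a_0>0$ and $a\in C^\alpha([0,T])$ with the Gevrey index $s$ inside $[1,1+\alpha/(1-\alpha))$, there exists a unique solution $u\in C^2([0,T],\gamma^s_\Lcal(G))$ of the Cauchy problem \eqref{CPa}.

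Finally, I would use the second inclusion in Part (ii) of Proposition \ref{PROP:Gevrey}, namely $\gamma^s_\Lcal(G)\subset\gamma^{rs}(G)$, to conclude that $u(t,\cdot)\in\gamma^{rs}(G)$ for each $t\in[0,T]$. Since $rs\ge 1$, Part (i) of Proposition \ref{PROP:Gevrey} identifies $\gamma^{rs}(G)$ with the classical Gevrey space $\gamma^{rs}(\Rn)$ read off in any local coordinate chart, which is exactly the conclusion of the corollary. There is no real obstacle in this argument; the substantive work is entirely absorbed into Theorem \ref{theo_case_2} (the global Fourier/symbolic reduction and the regularisation-separation scheme) and into the eigenvalue bounds for the matrix symbol of $\Lcal$ feeding Proposition \ref{PROP:Gevrey}(ii), so all that remains is to thread the two inclusions around the application of Theorem \ref{theo_case_2}.
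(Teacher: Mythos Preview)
Your proposal is correct and follows essentially the same approach as the paper: the corollary is obtained by sandwiching the first part of Theorem~\ref{theo_case_2} between the two inclusions of Proposition~\ref{PROP:Gevrey}(ii), with Proposition~\ref{PROP:Gevrey}(i) used at both ends to identify $\gamma^s(G)$ and $\gamma^{rs}(G)$ with the local Gevrey spaces $\gamma^s(\Rn)$ and $\gamma^{rs}(\Rn)$. The paper states this in the two sentences immediately preceding the corollary rather than giving a separate proof.
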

We now consider the situation when the propagation speed $a(t)$ may become
zero but is regular, i.e. $a\in C^\ell$ for $\ell\geq 2$.

\begin{thm}[Case 3]
\label{theo_case_3}
Assume that $a(t)\ge 0$ and that $a\in C^\ell([0,T])$ with $\ell\ge 2$. 
Then for initial data $u_0,u_1\in \gamma^s_\Lcal(G)$, the Cauchy problem \eqref{CPa} has a unique solution 
$u\in C^2([0,T], \gamma^s_\Lcal(G))$, provided that
\begin{equation}\label{EQ:s1}
1\le s<1+\frac{\ell}{2}.
\end{equation}
If $a(t)\ge 0$ belongs to $C^\infty([0,T])$ then the Cauchy problem \eqref{CPa} is well-posed in every 
Gevrey class $\gamma^s_\Lcal(G)$, $s\geq 1$.
\end{thm}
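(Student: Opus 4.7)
The plan is to reduce \eqref{CPa} to an infinite family of parameter-dependent scalar second-order ODEs via the group Fourier transform on $G$, to obtain a quasi-symmetriser energy estimate whose exponential growth is controlled by the eigenvalues of the symbol of $\Lcal$, and then to reassemble the solution inside $\gamma^{s}_{\Lcal}(G)$.

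First I would take the Fourier transform of \eqref{CPa}. Since the vector fields $X_{j}$ are left-invariant, the matrix symbol $\sigma_{\Lcal}(\xi)$ at $[\xi]\in\Gh$ is $t$-independent, Hermitian and negative semidefinite, and its diagonalisation decouples the transformed system into independent scalar ODEs
\be\label{PLAN:ode}
\ddot v(t)+a(t)\,\nu^{2}\,v(t)=0,\qquad t\in[0,T],
\ee
one for each eigenvalue $-\nu^{2}\leq 0$ of $\sigma_{\Lcal}(\xi)$, with initial data the corresponding scalar components of $\widehat{u_{0}}(\xi)$ and $\widehat{u_{1}}(\xi)$. Using the Fourier-side characterisation of $\gamma^{s}_{\Lcal}(G)$ by exponential decay in $\nu^{1/s}$ alluded to in \eqref{DEF:GevL-2}, the problem is reduced to proving the energy estimate
\be\label{PLAN:key}
\abs{v(t)}^{2}+\nu^{-2}\abs{\dot v(t)}^{2}\leq C\,\exp\!\p{C_{0}\,\nu^{2/(\ell+2)}}\,\p{\abs{v(0)}^{2}+\nu^{-2}\abs{\dot v(0)}^{2}},
\ee
uniformly in $\nu\geq 1$ and $t\in[0,T]$, since at the critical index $s=1+\ell/2$ one has $2/(\ell+2)=1/s$.

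To prove \eqref{PLAN:key} I would rewrite \eqref{PLAN:ode} as a first-order $2\times 2$ system $\dot U=\nu B(t)U$ with characteristic roots $\pm\irm\sqrt{a(t)}$ and construct a quasi-symmetriser in the spirit of Jannelli and D'Ancona--Kinoshita--Spagnolo: one fixes a small regularising parameter $\eps>0$, builds a family of symmetric positive matrices $Q_{\eps}(t)$ adapted to $a_{\eps}(t):=a(t)+\eps^{2}$, and studies the weighted energy $E_{\eps}(t):=\jp{Q_{\eps}(t)U(t),U(t)}$. Standard interpolation inequalities for nonnegative $C^{\ell}$ functions with $\ell\geq 2$ (essentially expressing that $\sqrt{a}$ has $C^{\ell/2}$ regularity near the zeros of $a$) produce a logarithmic energy bound of the form $\frac{d}{dt}\log E_{\eps}(t)\leq C\p{\eps\,\nu+\eps^{-2/\ell}}$ together with a two-sided comparison $E_{\eps}(t)\sim\eps^{N}\p{\abs{v}^{2}+\nu^{-2}\abs{\dot v}^{2}}$; the optimal choice $\eps:=\nu^{-2/(\ell+2)}$, integrated over $[0,T]$, then yields precisely \eqref{PLAN:key}.

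It remains to sum the squared pointwise-in-$\xi$ estimates against the Hilbert--Schmidt norms of the matrix Fourier coefficients over $[\xi]\in\Gh$: choosing an auxiliary $s'$ strictly between $s$ and $1+\ell/2$ in \eqref{PLAN:key}, the exponential loss $\exp(C_{0}\nu^{1/s'})$ is absorbed by the exponential decay $\exp(-A\nu^{1/s})$ encoding $u_{0},u_{1}\in\gamma^{s}_{\Lcal}(G)$, and the Plancherel theorem on $G$ produces $u\in C^{2}([0,T];\gamma^{s}_{\Lcal}(G))$; uniqueness follows by applying the same estimate to the difference of two solutions, and the $C^{\infty}$ statement is obtained by letting $\ell$ be arbitrary. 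The main obstacle is the sharp $\ell$-dependence of the quasi-symmetriser: it has to simultaneously balance the regularisation scale $\eps$, the large parameter $\nu$ and the possible zeros of $a(t)$, so that any suboptimality in the interpolation inequalities or in the off-diagonal error $Q_{\eps}B-B^{*}Q_{\eps}$ immediately degrades the admissible Gevrey index.
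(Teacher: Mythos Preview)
Your strategy is essentially the paper's own: reduction via the group Fourier transform to the scalar ODEs, first-order $2\times 2$ reformulation, Kinoshita--Spagnolo quasi-symmetriser energy $E_{\eps}=(Q^{(2)}_{\eps}V,V)$ with the bounds $|((Q^{(2)}_{\eps}A-A^{*}Q^{(2)}_{\eps})V,V)|\leq C\eps(Q^{(2)}_{\eps}V,V)$ and $\int_{0}^{T}|(\partial_{t}Q^{(2)}_{\eps}V,V)|/(Q^{(2)}_{\eps}V,V)\,dt\leq C\eps^{-2/\ell}$, then optimisation in $\eps$ and summation over $\Gh$. One arithmetic slip: balancing $\eps\nu$ against $\eps^{-2/\ell}$ gives $\eps=\nu^{-\ell/(\ell+2)}$, not $\eps=\nu^{-2/(\ell+2)}$; your value coincides with the correct one only for $\ell=2$, and for $\ell>2$ would leave the dominant term $\eps\nu=\nu^{\ell/(\ell+2)}$ too large to yield the exponent $\nu^{2/(\ell+2)}$ you need.
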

We now consider the case which is complementary to that in Theorem \ref{theo_case_3},
namely, when the propagation speed $a(t)$ may become
zero and is less regular, i.e. $a\in C^\alpha$ for $0<\alpha<2$.

\begin{thm}[Case 4]
\label{theo_case_4}
Assume that $a(t)\ge 0$ and that $a\in C^\alpha([0,T])$ with $0<\alpha<2$. 
Then, for initial data $u_0,u_1\in \gamma^s_\Lcal(G)$ the Cauchy problem \eqref{CPa} has a unique solution 
$u\in C^2([0,T], \gamma^s_\Lcal(G))$, provided that
\begin{equation}\label{EQ:s2}
1\le s<1+\frac{\alpha}{2}.
\end{equation}
\end{thm}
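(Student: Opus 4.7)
The plan is to mirror the proof of Theorem \ref{theo_case_2} (Case 2), with the regularisation chosen to absorb the possible vanishing of $a(t)$. Using the global Fourier analysis on $G$ and the matrix-symbol decoupling of the sum-of-squares operator $\Lcal$ (as will have been established in Sections \ref{SEC:Prelim}--\ref{SEC:reduction}), the Cauchy problem \eqref{CPa} reduces to a family of scalar second-order ODEs
\begin{equation*}
v''(t)+a(t)\nu^2 v(t)=0,\qquad v(0)=v_0,\ v'(0)=v_1,
\end{equation*}
one for each $\nu\ge 0$ with $-\nu^2$ an eigenvalue of $\sigma_\Lcal(\xi)$. The target estimate is
\begin{equation*}
|v'(t)|^2+\nu^2|v(t)|^2\le C\,\esp^{K\nu^{1/s_\ast}}\bigl(|v_1|^2+\nu^2|v_0|^2\bigr),\qquad s_\ast=1+\tfrac{\alpha}{2},
\end{equation*}
uniform in $t\in[0,T]$ and $\nu$. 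Since $\gamma^s_\Lcal(G)$ is characterised on the Fourier side by exponential decay $\esp^{-A\nu^{1/s}}$ of matrix coefficients, and since $\nu^{1/s}$ dominates $\nu^{1/s_\ast}$ for large $\nu$ whenever $s<s_\ast$, this bound propagates Gevrey $s$-regularity for every $s<1+\alpha/2$ on the whole interval $[0,T]$.

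For the ODE estimate, I would mollify $a$ to $a_\eps:=a\ast\rho_\eps$ with a standard mollifier; since $a\ge 0$ and $a\in C^\alpha$ with $0<\alpha<2$, the standard estimates
\begin{equation*}
|a(t)-a_\eps(t)|\le C\eps^\alpha,\qquad |a_\eps'(t)|\le C\eps^{\alpha-1},\qquad a_\eps(t)+\eps^\alpha\ge\eps^\alpha>0,
\end{equation*}
hold, and the shift $a_\eps+\eps^\alpha$ ensures separation of the regularised characteristic roots $\pm\nu\sqrt{a_\eps+\eps^\alpha}$. Working with the quasi-hyperbolic energy
\begin{equation*}
E_\eps(t):=|v'(t)|^2+\nu^2\bigl(a_\eps(t)+\eps^\alpha\bigr)|v(t)|^2,
\end{equation*}
differentiating along a solution and using $2|v||v'|\le|v'|^2/\sqrt{B(t)}+\sqrt{B(t)}|v|^2$ with $B(t)=\nu^2(a_\eps(t)+\eps^\alpha)$ gives
\begin{equation*}
|E_\eps'(t)|\le\left(\frac{\nu|a_\eps(t)+\eps^\alpha-a(t)|}{\sqrt{a_\eps(t)+\eps^\alpha}}+\frac{|a_\eps'(t)|}{a_\eps(t)+\eps^\alpha}\right)E_\eps(t)\le C\bigl(\nu\eps^{\alpha/2}+\eps^{-1}\bigr)E_\eps(t).
\end{equation*}

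Balancing the two right-hand side terms by choosing $\eps=\nu^{-2/(\alpha+2)}$ makes each equal to $C\nu^{2/(\alpha+2)}=C\nu^{1/s_\ast}$, and Gronwall yields $E_\eps(T)\le \esp^{CT\nu^{1/s_\ast}}E_\eps(0)\le C\esp^{CT\nu^{1/s_\ast}}(|v_1|^2+\nu^2|v_0|^2)$. The physical energy $|v'|^2+\nu^2|v|^2$ differs from $E_\eps$ only by the factor $\eps^{-\alpha}=\nu^{2\alpha/(\alpha+2)}$, which is polynomial in $\nu$ and hence absorbed into the exponential with a slightly enlarged constant. Summing the resulting bounds over $[\xi]\in\Gh$ via the Plancherel formula, together with the matrix-symbol decoupling, controls the Gevrey norm $\|\esp^{A(-\Lcal)^{1/(2s)}}u(t,\cdot)\|_{L^2}$ in terms of the corresponding norm of the initial data, for any $s<s_\ast$ and any $A$ strictly smaller than the initial-data exponent diminished by $CT$. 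This yields $u\in C^2([0,T],\gamma^s_\Lcal(G))$; uniqueness follows from the ODE theory applied at each Fourier mode.

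The main obstacle is the balancing in the energy estimate: the error from replacing $a$ by $a_\eps+\eps^\alpha$ pushes $\eps$ to be small, while the error from $a_\eps'$ pushes $\eps$ to be large, and the exponent $\alpha/2$ in the threshold $s<1+\alpha/2$ is precisely the optimal trade-off between these opposing constraints. A minor technicality is that the bound $|a_\eps'|\le C\eps^{\alpha-1}$ as stated is the sharp one for $0<\alpha<1$; for $1\le\alpha<2$ one either uses the a priori bound $|a_\eps'|\le\|a'\|_{L^\infty}$, which is even better, or a mollifier with an additional vanishing moment $\int s\rho(s)\,ds=0$, both of which yield the same final balance and exponent $s_\ast=1+\alpha/2$.
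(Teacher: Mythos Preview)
Your approach---the Colombini--De Giorgi--Spagnolo approximate-energy method applied to each scalar Fourier mode---is genuinely different from the paper's. The paper passes to the first-order $2\times 2$ system, mollifies the characteristic roots $\pm\sqrt{a}$ (which lie in $C^{\alpha/2}$), and diagonalises via a matrix $H$ built from artificially separated regularised eigenvalues $\lambda_j=\pm(\sqrt{a}\ast\varphi_\eps)+c_j\eps^{\alpha/2}$; you stay at the scalar second-order level with the energy $E_\eps = |v'|^2 + \nu^2(a_\eps+\eps^\alpha)|v|^2$. Your route is shorter and avoids the matrix bookkeeping of $H$, $H^{-1}$ and $\det H$; the paper's route treats Cases~2 and~4 in parallel by mollifying $\sqrt{a}$ rather than $a$. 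For $0<\alpha\le 1$ your argument is complete and correct, and the balance $\eps=\nu^{-2/(\alpha+2)}$ gives exactly the threshold $s_\ast=1+\alpha/2$.

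There is, however, a genuine gap for $1<\alpha<2$. Your energy estimate requires $\dfrac{|a_\eps'(t)|}{a_\eps(t)+\eps^\alpha}\le C\eps^{-1}$, and neither of your proposed fixes delivers this. Using $|a_\eps'|\le\|a'\|_{L^\infty}=:M$ gives only $\dfrac{M}{a_\eps+\eps^\alpha}\le M\eps^{-\alpha}$; balancing $\nu\eps^{\alpha/2}$ against $\eps^{-\alpha}$ then produces the Gronwall exponent $\nu^{2/3}$, hence only $s_\ast=3/2$, strictly below $1+\alpha/2$ when $\alpha>1$. A vanishing-moment mollifier improves $|a-a_\eps|$ to $O(\eps^\alpha)$ but does nothing for $a_\eps'$, since $a_\eps'=a'+O(\eps^{\alpha-1})$ still carries the order-one term $|a'|$. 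The missing ingredient is the Glaeser-type inequality for nonnegative $C^\alpha$ functions with $1<\alpha\le 2$, namely $|a'(t)|\le C\,a(t)^{(\alpha-1)/\alpha}$. Jensen's inequality transfers this to $|a_\eps'|\le C\,a_\eps^{(\alpha-1)/\alpha}$, and maximising $x^{(\alpha-1)/\alpha}/(x+\eps^\alpha)$ over $x\ge 0$ yields precisely $C\eps^{-1}$; with this correction your balance and the conclusion $s_\ast=1+\alpha/2$ go through. The paper sidesteps this step by working directly with $\sqrt{a}\in C^{\alpha/2}$, which of course encodes the same Glaeser-type fact.
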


Theorems \ref{theo_case_3} and \ref{theo_case_4} have obvious consequences,
similar to those in the second part of Theorem \ref{theo_case_2} and in 
Corollary \ref{COR:case-2}. 
Namely, for initial data $u_0,u_1\in \gamma^s(G)$, $s\geq 1$,
the Cauchy problem \eqref{CPa} has a unique solution 
$u\in C^2([0,T], \gamma^{rs}(G))$, 
provided that $s$ also satisfies conditions \eqref{EQ:s1} or
\eqref{EQ:s2}, respectively.

We note that we could have united formulations
of Theorems \ref{theo_case_3} and \ref{theo_case_4} 
in a single statement but we decided to separate them since
our proofs of these two theorems are in fact very different, based on
quasi-symmetrisers and regularisation and separation of 
characteristic roots, respectively.

Finally, we note that using a characterisation of ultradistributions on compact
Lie groups from \cite{Dasgupta-Ruzhansky:Gevrey-arxiv}, one can obtain
counterparts of the Gevrey results also in the corresponding spaces of
ultradistributions (see \cite{GR:11} for an example of such an argument in $\Rn$).

\section{Fourier analysis and symbolic properties of sub-Laplacians}
\label{SEC:Prelim}

In this section we recall the necessary elements of the global Fourier analysis
that we will be using, and establish properties of the matrix symbols of
sub-Laplacians, leading to embedding properties of the associated Sobolev spaces. 
The matrix symbols for operators on compact Lie groups have
been developed in \cite{Ruzhansky-Turunen:BOOK,Ruzhansky-Turunen:IMRN}
to which we refer also for the details of the Fourier analysis reviewed below.

Let $\Gh$ be the unitary dual of $G$, consisting of the
equivalence classes $[\xi]$ of the continuous irreducible unitary representations
$\xi:G\to \C^{\dxi\times \dxi}$,
of matrix-valued functions satisfying
$\xi(xy)=\xi(x)\xi(y)$ and $\xi(x)^{*}=\xi(x)^{-1}$ for all
$x,y\in G$.
For a function
$f\in C^{\infty}(G)$ we can define its Fourier coefficient at $[\xi]\in\Gh$ by
$$
\widehat{f}(\xi):=\int_{G} f(x) \xi(x)^{*}dx\in\Cdxi,
$$
where the integral is (always) taken with respect to the Haar measure on $G$,
and with a natural extension to distributions.
The Fourier series becomes
$$
f(x)=\sum_{[\xi]\in\Gh} \dxi \Tr\p{\xi(x)\widehat{f}(\xi)},
$$
with the Plancherel's identity taking the form
\begin{equation}\label{EQ:Plancherel}
\|f\|_{L^{2}(G)}=\p{\sumxi \dxi
\|\widehat{f}(\xi)\|_{\HS}^{2}}^{1/2}=:
\|\widehat{f}\|_{\ell^{2}(\Gh)},
\end{equation}
which we take as the definition of the norm on the Hilbert space
$\ell^{2}(\Gh)$, and where 
$ \|\widehat{f}(\xi)\|_{\HS}^{2}=\Tr(\widehat{f}(\xi)\widehat{f}(\xi)^{*})$ is the
Hilbert--Schmidt norm of the matrix $\widehat{f}(\xi)$.
For a Laplacian $\Delta$ on $G$, we have that for a fixed $[\xi]\in\Gh$,
all $\xi_{ij}(x)$, $1\leq i,j\leq \dxi$, are eigenfunctions of $-\Delta$ with the
same eigenvalue, which we denote by $|\xi|^2$, so that we have
$$
-\Delta \xi_{ij}(x)=|\xi|^2 \xi_{ij}(x) \textrm{ for all } 1\leq i,j\leq \dxi.
$$
We denote $\jp{\xi}:=(1+|\xi|^2)^{1/2}$, which are the eigenvalues of the 
first order elliptic operator $(I-\Delta)^{1/2}.$

Smooth functions and distributions on $G$ can be characterised in terms of
their Fourier coefficients. Thus, we have 
$$
f\in C^{\infty}(G)\Longleftrightarrow
\forall N \;\exists C_{N} \textrm{ such that }
\|\widehat{f}(\xi)\|_{\HS}\leq C_{N} \jp{\xi}^{-N}
\textrm{ for all } [\xi]\in\Gh.
$$ 
Also, for distributions, we have
$$
u\in \Dcal'(G)
\Longleftrightarrow
\exists M \;\exists C \textrm{ such that }
\|\widehat{u}(\xi)\|_{\HS}\leq C\jp{\xi}^{M}
\textrm{ for all } [\xi]\in\Gh.
$$
Furthermore, importantly for our results,  it was established 
in \cite{Dasgupta-Ruzhansky:Gevrey-arxiv} that
the Gevrey ultradifferentiable functions and ultradistributions on 
compact Lie groups, initially defined in local coordinates, can be
also characterised in terms of their Fourier coefficients. Thus, for $s\geq 1$,
$$
f\in \gamma^{s}(G)\Longleftrightarrow
\exists A>0, \; C>0 \textrm{ such that }
\|\widehat{f}(\xi)\|_{\HS}\leq C \esp^{-A\jp{\xi}^{1/s}}
\textrm{ for all } [\xi]\in\Gh.
$$ 
Here the space $\gamma^{s}(G)$ is the usual Gevrey space $\gamma^s(\Rn)$
extended to $G$ viewed as an analytic manifold, as in
Proposition \ref{PROP:Gevrey}, Part (i).

Given a linear continuous operator $T:C^{\infty}(G)\to C^{\infty}(G)$
(or even $T:C^{\infty}(G)\to \Dcal'(G)$), we define its matrix symbol by
$\sigma_{T}(x,\xi):=\xi(x)^{*} (T \xi)(x)\in \Cdxi$, where
$T \xi$ means that we apply $T$ to the matrix components of $\xi(x)$.
In this case we may prove that 
\begin{equation}\label{EQ:T-op}
Tf(x)=\sumxi \dxi \Tr\p{\xi(x)\sigma_{T}(x,\xi)\widehat{f}(\xi)}.
\end{equation}
The correspondence between operators and symbols is one-to-one, and
we will write $T_{\sigma}$ for the operator given by 
\eqref{EQ:T-op} corresponding to the symbol $\sigma(x,\xi)$.
The quantization \eqref{EQ:T-op} has been extensively studied in
\cite{Ruzhansky-Turunen:BOOK,Ruzhansky-Turunen:IMRN}, to which we
refer for its properties and for the corresponding symbolic calculus.

In particular, if $X_1,\ldots,X_n$ is an orthonormal basis of the Lie algebra of $G$,
then the symbol of the Laplacian 
$\Delta=X_1^2+\cdots+X_n^2$ is
$\sigma_\Delta(\xi)=-|\xi|^2 I_\dxi$, where 
$I_\dxi\in\Cdxi$ is the identity matrix.

We now turn to analysing properties of the matrix symbol of the operator
\eqref{EQ:subL}, namely, of the operator
$$
\Lcal=X_1^2+\cdots+X_k^2.
$$
The operator $\Lcal$ is formally self-adjoint, therefore its symbol 
$\sigma_\Lcal$ can be diagonalised by a choice of the basis in the representation
spaces. Moreover, the operator $-\Lcal$ is positive definite as sum of squares of
vector fields. Therefore, without loss of generality, we can always write
\begin{equation}\label{EQ:subL-symbol}
\sigma_{-\Lcal}(\xi)=
\left(\begin{matrix}
\nu_1^2(\xi) &  0  & \ldots & 0\\
0  & \nu_2^2(\xi)  & \ldots & 0\\
\vdots & \vdots & \ddots & \vdots\\
0  &   0       &\ldots & \nu_\dxi^2(\xi)
\end{matrix}\right),
\end{equation}
for some $\nu_j(\xi)\geq 0$.

Consequently, we can also define Sobolev spaces $H^s_\Lcal(G)$ associated to 
sums of squares. Thus, for any $s\in\Rr$, we set
\begin{equation}\label{EQ:HsL}
H^s_\Lcal(G):=\left\{ f\in\Dcal'(G): (I-\Lcal)^{s/2}f\in L^2(G)\right\},
\end{equation}
with the norm $\|f\|_{H^s_\Lcal}:=\|(I-\Lcal)^{s/2}f\|_{L^2}.$ Using Plancherel's
identity \eqref{EQ:Plancherel}, we can write
\begin{multline}\label{EQ:Hsub-norm}
\|f\|_{H^s_\Lcal}=\|(I-\Lcal)^{s/2}f\|_{L^2}=
\p{\sumxi \dxi \|(I_\dxi-\sigma_\Lcal(\xi))^{s/2}\widehat{f}(\xi)\|_\HS^2}^{1/2} \\
= \p{ \sumxi \dxi \sum_{j=1}^\dxi (1+\nu_j^2(\xi))^{s} \sum_{m=1}^\dxi
|\widehat{f}(\xi)_{jm}|^2  }^{1/2}.
\end{multline}
There are different characterisations of such Sobolev spaces, also in more generality:
for example, see \cite{FMV} for a heat kernel description, etc.
However, for our purposes, the Fourier description \eqref{EQ:Hsub-norm} will suffice.

We note that using the Plancherel identity,
the Gevrey space $\gamma^s_\Lcal(G)$ defined in \eqref{DEF:GevL}
can be characterised by the condition

\begin{multline}\label{DEF:GevL-2}
f\in \gamma^s_\Lcal \Longleftrightarrow
\exists A>0: 
\| \esp^{A(-\Lcal)^{\frac{1}{2s}}} f\|_{L^2}^2 = \sumxi \dxi
\|\esp^{A\sigma_{-\Lcal}(\xi)^{\frac{1}{2s}}}\widehat{f}(\xi)\|_{\HS}^2 \\
= \sumxi \dxi \sum_{j=1}^\dxi 
\esp^{A\nu_j(\xi)^{1/s}} \sum_{m=1}^\dxi
|\widehat{f}(\xi)_{jm}|^2  
<\infty,
\end{multline}
where now the matrix $e^{A\sigma_{-\Lcal}(\xi)^{\frac{1}{2s}}}$ is well-defined in view of the
diagonal form of $\sigma_{-\Lcal}(\xi)$ in \eqref{EQ:subL-symbol}.

We now assume that $X_1,\ldots,X_k$ is a family of left-invariant vector fields such that 
their iterated commutators of length $\leq r$ span the Lie algebra of $G$, and 
establish a relation between $\nu_j(\xi)$ and the eigenvalues of the Laplacian,
yielding also the embedding properties between Sobolev spaces $H^s_\Lcal(G)$ and
the usual Sobolev spaces $H^s(G)$ on $G$ viewed as a smooth manifold. 
Here we note that the usual Sobolev spaces $H^s=H^s(G)$ can be also characterised
as the set of $f\in \Dcal'(G)$ such that $(I-\Delta)^{s/2}f\in L^2(G)$, with the
corresponding equivalence of norms. For integers $s\in\N$, the embedding
$H^s_\Lcal\subset H^{s/r}$ in \eqref{EQ:Hsub-incl} is, in fact, Theorem 13 in 
\cite{Rothschild-Stein:AM-1976}.

\begin{prop}\label{PROP:subL}
Let $X_1,\ldots,X_k$ is a family of left-invariant vector fields such that 
their iterated commutators of length $\leq r$ span the Lie algebra of $G$.
Let 
$$
\Lcal:=X_1^2+\cdots+X_k^2
$$
be the corresponding sub-Laplacian with symbol \eqref{EQ:subL-symbol}.
Then there exists a constant $c>0$ such that 
\begin{equation}\label{EQ:nu-ests}
c\jp{\xi}^{1/r}\leq \nu_j(\xi)+1\leq \sqrt{2} \jp{\xi} \textrm{ for all }
[\xi]\in\Gh \textrm{ and } 1\leq j\leq\dxi.
\end{equation}
Consequently, for $s\geq 0$ we have the continuous embeddings
\begin{equation}\label{EQ:Hsub-incl}
H^s\subset H^s_\Lcal\subset H^{s/r}
\textrm{ and }
H^{-s/r}\subset H^{-s}_\Lcal\subset H^{-s}.
\end{equation}
More precisely, for any $s\geq 0$ there exist constants 
$C_1, C_2>0$ such that we have
\begin{equation}\label{EQ:Hsub-embs}
C_1\|f\|_{H^{s/r}}\leq \|f\|_{H^s_\Lcal}\leq C_2 \|f\|_{H^s}
\textrm{ and }
C_1\|f\|_{H^{-s}}\leq \|f\|_{H^{-s}_\Lcal}\leq C_2 \|f\|_{H^{-s/r}}.
\end{equation}
\end{prop}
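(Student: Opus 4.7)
The plan is to treat the upper bound, the lower bound, and the Sobolev embeddings as three separate steps, with most of the substance concentrated in the lower bound.

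\medskip

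For the upper bound in \eqref{EQ:nu-ests}, I would start by recalling that for any left-invariant vector field $X_i$ the symbol $\sigma_{X_i}(\xi) = (d\xi)(X_i)$ is skew-Hermitian (since unitary representations differentiate to skew-Hermitian operators), hence $\sigma_{-X_i^2}(\xi) = \sigma_{X_i}(\xi)^{*}\sigma_{X_i}(\xi)\geq 0$ in the matrix sense. Summing gives $\sigma_{-\Lcal}(\xi) = \sum_{i=1}^k \sigma_{X_i}(\xi)^{*}\sigma_{X_i}(\xi)\geq 0$, which justifies the diagonalization \eqref{EQ:subL-symbol} with non-negative $\nu_j(\xi)$. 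Completing (after rescaling) $X_1,\ldots,X_k$ to an orthonormal basis $X_1,\ldots,X_k,Z_{k+1},\ldots,Z_n$ of $\mathfrak g$, the full Laplacian $\Delta=X_1^2+\cdots+X_k^2+Z_{k+1}^2+\cdots+Z_n^2$ satisfies $\sigma_{-\Delta}(\xi)=|\xi|^2 I_{\dxi}$, so $\sigma_{-\Lcal}(\xi)\leq |\xi|^2 I_{\dxi}$ and therefore $\nu_j(\xi)\leq|\xi|$. The trivial estimate $(a+b)^2\leq 2(a^2+b^2)$ then gives $\nu_j(\xi)+1\leq\sqrt{2}\jp{\xi}$.

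\medskip

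The lower bound is the main obstacle and is where the H\"ormander order $r$ enters. Rather than re-deriving the subelliptic estimate, I would invoke the Rothschild--Stein theorem (Theorem~13 of \cite{Rothschild-Stein:AM-1976}, quoted in the paper) in the form $\|f\|_{H^{1/r}(G)}\leq C\|f\|_{H^1_\Lcal(G)}$ for all $f\in C^\infty(G)$, and then extract a pointwise bound on individual eigenvalues by testing on single matrix coefficients. Concretely, for a fixed class $[\xi_0]\in\Gh$ and indices $1\leq i,j\leq d_{\xi_0}$, take $f(x)=(\xi_0)_{ij}(x)$. By Schur orthogonality $\widehat{f}(\eta)$ vanishes for $[\eta]\neq[\xi_0]$ and equals a single-entry matrix (of size $1/d_{\xi_0}$) for $\eta=\xi_0$. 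Substituting into \eqref{EQ:Hsub-norm} one computes $\|f\|_{H^1_\Lcal}^2 = d_{\xi_0}^{-1}\bigl(1+\nu_\ell^2(\xi_0)\bigr)$ for a specific index $\ell$ determined by $i,j$, while the Plancherel characterisation of $H^{1/r}(G)$ gives $\|f\|_{H^{1/r}}^2 = d_{\xi_0}^{-1}\jp{\xi_0}^{2/r}$. Rothschild--Stein then reads $\jp{\xi_0}^{1/r}\leq C\bigl(1+\nu_\ell(\xi_0)\bigr)$; varying $i,j$ lets $\ell$ range over $\{1,\ldots,d_{\xi_0}\}$, yielding the lower half of \eqref{EQ:nu-ests}.

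\medskip

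With \eqref{EQ:nu-ests} in hand the embeddings \eqref{EQ:Hsub-incl} and norm inequalities \eqref{EQ:Hsub-embs} are mechanical. Using \eqref{EQ:Hsub-norm} together with the Plancherel identity for the usual Sobolev norm, $\|f\|_{H^s}^2=\sum_{[\xi]}d_\xi\jp{\xi}^{2s}\|\widehat f(\xi)\|_\HS^2$, the upper bound $\nu_j(\xi)+1\leq\sqrt{2}\jp{\xi}$ gives $(1+\nu_j^2(\xi))^s\leq C\jp{\xi}^{2s}$ for $s\geq 0$ and hence $\|f\|_{H^s_\Lcal}\leq C_2\|f\|_{H^s}$, while the lower bound gives $\jp{\xi}^{2s/r}\leq C(1+\nu_j^2(\xi))^s$ and hence $C_1\|f\|_{H^{s/r}}\leq\|f\|_{H^s_\Lcal}$. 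The negative-order statements follow either by the same pointwise comparison applied to the summands $(1+\nu_j^2(\xi))^{-s}$ and $\jp{\xi}^{-2s}$, or by dualising, using that $H^{-s}_\Lcal=(H^s_\Lcal)^{*}$ with respect to the $L^2$-pairing, which reverses every inclusion.

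\medskip

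The genuinely hard step is the lower bound on $\nu_j(\xi)$: it is equivalent in strength to H\"ormander's subellipticity in terms of the commutator order $r$, so any self-contained proof would essentially reproduce the work of Rothschild--Stein. The convenient observation is that once a single integer-order subelliptic inequality is available, testing on matrix coefficients turns it into a clean pointwise statement about the eigenvalues $\nu_j(\xi)$, after which everything else is linear algebra combined with Plancherel. A minor technical point worth flagging is that the estimate takes the form $\jp{\xi}^{1/r}\lesssim 1+\nu_j(\xi)$ rather than $\jp{\xi}^{1/r}\lesssim\nu_j(\xi)$, which is sharp since the trivial representation has $\nu_j\equiv 0$ while $\jp{\xi}=1$.
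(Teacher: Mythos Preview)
Your proposal is correct and follows essentially the same strategy as the paper: invoke a Rothschild--Stein subelliptic estimate, pass to the Fourier side via Plancherel, and extract the pointwise bound on $\nu_j(\xi)$ by testing on functions whose Fourier transform is supported at a single $[\xi]$; the upper bound and the Sobolev embeddings are then handled exactly as you describe. The only cosmetic differences are that the paper cites Theorem~18 of \cite{Rothschild-Stein:AM-1976} (the estimate $\|f\|_{H^{2/r}}^2\leq C(\|\Lcal f\|_{L^2}^2+\|f\|_{L^2}^2)$) rather than Theorem~13, and tests on $f$ with $\widehat f(\xi)=A$ for an arbitrary matrix $A$ rather than on individual matrix coefficients --- both choices lead to the same conclusion by the same mechanism.
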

\begin{proof}
The proof of \eqref{EQ:nu-ests} is easy if we use the following result
by Rothschild and Stein \cite{Rothschild-Stein:AM-1976}.
In Theorem 18 in \cite{Rothschild-Stein:AM-1976} it was
shown, in particular, that for a sub-Laplacian $\Lcal$ satisfying 
H\"ormander condition of order $\leq r$, we have the estimate
$$
\|f\|^2_{H^{2/r}}\leq C(\|\Lcal f\|^2_{L^2}+\|f\|^2_{L^2}).
$$
Using the Fourier series and Plancherel's theorem, this is equivalent to
the estimate
$$
\sumxi \dxi \jp{\xi}^{4/r} \|\widehat{f}(\xi)\|_\HS^2 \leq 
C\sumxi \dxi (\|\sigma_\Lcal(\xi) \widehat{f}(\xi)\|_\HS^2
+ \|\widehat{f}(\xi)\|_\HS^2),
$$
holding for all $f\in H^2_\Lcal$. 
In particular, applying this to $f$ such that $\widehat{f}(\xi)=A$ for some
$[\xi]\in\Gh$ and zero for all other $[\xi]$, it follows that we have
the estimate
$$
\jp{\xi}^{2/r} \|A\|_\HS\leq C(\|\sigma_\Lcal(\xi) A\|_\HS+\|A\|_\HS)
$$
for all $A\in\Cdxi$. Now, recalling that the symbol $\sigma_\Lcal$ is
diagonal of the form \eqref{EQ:subL-symbol}, we obtain that
$\jp{\xi}^{2/r} \leq C(\nu_j^2(\xi)+1)$ for all $1\leq j\leq\dxi$,
proving the first (left) inequality in \eqref{EQ:nu-ests}.
The second estimate  in \eqref{EQ:nu-ests} follows 
from the relation $\jp{\xi}=(1+|\xi|^2)^{1/2}$ and the
estimate $\nu_j^2(\xi)\leq |\xi|^2$, which is a consequence of
the fact that the operator $\Delta-\Lcal=X_{k+1}^2+\cdots +X_n^2$
is formally self-adjoint and negative definite.

To obtain \eqref{EQ:Hsub-embs}, we observe that the second estimate
in \eqref{EQ:Hsub-embs} follows from the first one by duality.
In turn,  the first part of \eqref{EQ:Hsub-embs} follows from
\eqref{EQ:Hsub-norm} using estimate 
\eqref{EQ:nu-ests}.
\end{proof}

\begin{rem}\label{REM:hyp}
We note that the Cauchy problem \eqref{CPa} in local coordinates
is always hyperbolic.
In fact, the positivity of the matrix symbol $\sigma_{-\Lcal}$
implies that the operator $\Lcal$ satisfies the sharp
G{\aa}rding inequality, see \cite{Ruzhansky-Turunen:JFA-Garding}.
Consequently, the principal symbol of $-\Lcal$ in any local
coordinate system is non-negative, implying that the operator
$\partial_t^2-\Lcal$ is hyperbolic.
\end{rem}

\section{Reduction to first order system and energy estimates}
\label{SEC:reduction}

The operator $\Lcal$  has the symbol \eqref{EQ:subL-symbol}, which we can write
in matrix components as 
$$
\sigma_{-\Lcal}(\xi)_{mk}=\nu_m^2(\xi)\delta_{mk},
\; 1\leq m,k\leq \dxi,
$$ 
with $\delta_{mk}$ standing for the Kronecker's delta.
Taking the Fourier transform of
\eqref{CPa}, we obtain the collection of Cauchy problems for 
matrix-valued Fourier coefficients:
\begin{equation}\label{CPa-FC}
\partial_{t}^{2}\widehat{u}(t,\xi)-a(t)\sigma_{\Lcal}(\xi)\widehat{u}(t,\xi)=0,
\; [\xi]\in\Gh.
\end{equation}
Writing this in the matrix form, we see that this is equivalent to the system
$$
\partial_{t}^{2} \widehat{u}(t,\xi)+
a(t) \left(\begin{matrix}
\nu_1^2(\xi) &  0  & \ldots & 0\\
0  &  \nu_2^2(\xi) & \ldots & 0\\
\vdots & \vdots & \ddots & \vdots\\
0  &   0       &\ldots & \nu_\dxi^2(\xi)
\end{matrix}\right) \widehat{u}(t,\xi)=0,
$$
where we put explicitly the diagonal symbol 
$\sigma_{\Lcal}(\xi)$.
Rewriting \eqref{CPa-FC} in terms of matrix coefficients
$\widehat{u}(t,\xi)=\left(\widehat{u}(t,\xi)_{mk}\right)_{1\leq m,k\leq \dxi}$, 
we get the equations
\begin{equation}\label{EQ:WE-scalars}
\partial_{t}^{2} \widehat{u}(t,\xi)_{mk}+ a(t) \nu_m^2(\xi)
 \widehat{u}(t,\xi)_{mk}=0,\qquad [\xi]\in\Gh,\;
1\leq m,k\leq \dxi.
\end{equation}
The main point of our further analysis is that we can make an individual
treatment of the equations in \eqref{EQ:WE-scalars}.
Thus, let us fix $[\xi]\in\Gh$ and $m,k$ with $1\leq m,k\leq \dxi$,
and let us denote 
$\widehat{v}(t,\xi):=\widehat{u}(t,\xi)_{mk}$. 
We then study the Cauchy problem
\begin{equation}\label{EQ:WE-v}
\partial_{t}^{2} \widehat{v}(t,\xi)+ a(t) \nu_m^2(\xi)
 \widehat{v}(t,\xi)=0,\quad 
 \widehat{v}(t,\xi)=\widehat{v}_{0}(\xi), \; 
 \partial_{t}\widehat{v}(t,\xi)=\widehat{v}_{1}(\xi),
\end{equation}
with $\xi,m$ being parameters, and want to derive estimates
for $\widehat{v}(t,\xi)$. Combined with characterisations of 
Sobolev, smooth and Gevrey functions, this will yield the well-posedness
results for the original Cauchy problem \eqref{CPa}.

In the sequel, for fixed $m$, we set
\begin{equation}\label{xi_l}
|\xi|_\nu:=\nu_m^2(\xi).
\end{equation}
Hence, the equation in \eqref{EQ:WE-v} can be written as
\begin{equation}
\label{eq_xi}
\partial_{t}^{2} \widehat{v}(t,\xi)+a(t)|\xi|_\nu^2\widehat{v}(t,\xi)=0.
\end{equation}
Note that if $|\xi|_\nu\not=0$,
the equation \eqref{eq_xi} is of strictly hyperbolic type
in Case 1 and 2 and weakly hyperbolic in Case 3 and 4.  We now proceed 
with a standard reduction to a first order system of this equation and define the corresponding energy. 
The energy estimates will be given in terms of $t$ and $|\xi|_\nu$ and we then go back to $t$, 
$\xi$ and $m$ by using \eqref{xi_l}.

We now use the transformation
\[
\begin{split}
V_1&:=i|\xi|_\nu\widehat{v},\\
V_2&:= \partial_t \widehat{v}.
\end{split}
\]
It follows that the equation \eqref{eq_xi} can be written as the first order system
\begin{equation}\label{EQ:system}
\partial_t V(t,\xi)=i|\xi|_\nu A(t)V(t,\xi),
\end{equation}
where $V$ is the column vector with entries $V_1$ and $V_2$ and
\[
A(t)=\left(
    \begin{array}{cc}
      0 & 1\\
      a(t) & 0 \\
           \end{array}
  \right).
\]
The initial conditions $\widehat{v}(0,\xi)=\widehat{v}_{0}(\xi)$, $\partial_{t}\widehat{v}(0,\xi)=\widehat{v}_{1}(\xi)$ 
are transformed into
\[
V(0,\xi)=\left(
    \begin{array}{c}
      i|\xi|_\nu \widehat{v}_0(\xi)\\
      \widehat{v}_{1}(\xi)
     \end{array}
  \right).
\]
Note that the matrix $A$ has eigenvalues $\pm\sqrt{a(t)}$ and symmetriser
\[
S(t)=\left(
    \begin{array}{cc}
      2a(t) & 0\\
      0 & 2 \\
           \end{array}
  \right).
\]
By definition of the symmetriser we have that 
\[
SA-A^\ast S=0.
\]
It is immediate to prove that
\begin{equation}
\label{est_sym}
2\min_{t\in[0,T]}(a(t),1)|V|^2\le (SV,V)\le 2\max_{t\in[0,T]}(a(t),1)|V|^2,
\end{equation}
where $(\cdot,\cdot)$ and $|\cdot|$ denote the inner product and the norm in $\C^2$, respectively.

\subsection{Case 1: Proof of Theorem \ref{theo_case_1}}

In Case 1 ($a(t)>0$, $a\in C^1([0,T])$) it is clear that there exist constants $a_0>0$ and $a_1>0$ such that 
\[
a_0=\min_{t\in[0,T]}a(t)
\; \textrm{ and } \;
a_1=\max_{t\in[0,T]}{a(t)}.
\]
Hence \eqref{est_sym} implies,
\begin{equation}
\label{est_sym_1}
c_0|V|^2=2\min(a_0,1)|V|^2\le (SV,V)\le 2\max(a_1,1)|V|^2=c_1|V|^2,
\end{equation}
with $c_0,c_1>0$.
We then define the energy 
$$E(t,\xi):=(S(t)V(t,\xi),V(t,\xi)).$$ 
We get, from \eqref{est_sym_1}, that
\begin{multline*}
\partial_t E(t,\xi)=(\partial_t S(t)V(t,\xi),V(t,\xi))+(S(t)\partial_t V(t,\xi),V(t,\xi))+(S(t)V(t,\xi),\partial_t V(t,\xi))\\
=(\partial_t S(t)V(t,\xi),V(t,\xi))+i|\xi|_\nu (S(t)A(t)V(t,\xi),V(t,\xi))-i|\xi|_\nu (S(t)V(t,\xi),A(t)V(t,\xi))\\
=(\partial_t S(t)V(t,\xi),V(t,\xi))+i|\xi|_\nu ((SA-A^\ast S)(t)V(t,\xi),V(t,\xi))\\
=(\partial_t S(t)V(t,\xi),V(t,\xi))\le \Vert \partial_t S\Vert |V(t,\xi)|^2\le c' E(t,\xi)
\end{multline*}
i.e. we obtain
\begin{equation}
\label{E_1}
\partial_t E(t,\xi)\le c' E(t,\xi),
\end{equation}
for some constant $c'>0$. By Gronwall's lemma applied to inequality
\eqref{E_1} we conclude that for all $T>0$ there exists $c>0$ such that
\[
E(t,\xi)\le c E(0,\xi).
\]
Hence, inequalities \eqref{est_sym_1} yield
\[
c_0|V(t,\xi)|^2\le E(t,\xi)\le c E(0,\xi)\le cc_1|V(0,\xi)|^2,
\]
for constants independent of $t\in[0,T]$ and $\xi$. This allows us to write the following statement: 
there exists a constant $C_1>0$ such that
\begin{equation}
\label{case_1_est}
|V(t,\xi)|\le C_1 |V(0,\xi)|,
\end{equation}
for all $t\in[0,T]$ and $\xi$. Hence
\[
|\xi|_\nu^2 |\widehat{v}(t,\xi)|^2+|\partial_t \widehat{v}(t,\xi)|^2
\le C_1'( |\xi|_\nu^2 |\widehat{v}_0(\xi)|^2+|\widehat{v}_1(\xi)|^2).
\]
Recalling the notation 
$\widehat{v}(t,\xi)=\widehat{u}(t,\xi)_{mk}$ and $|\xi|_\nu=\nu_m(\xi)$, this means
\begin{equation}
\label{case_1_est_mn}
\nu_m^2(\xi) |\widehat{u}(t,\xi)_{mk}|^2+|\partial_t \widehat{u}(t,\xi)_{mk}|^2
\le C_1'( \nu_m^2(\xi) |\widehat{u}_0(\xi)_{mk}|^2+|\widehat{u}_1(\xi)_{mk}|^2)
\end{equation}
for all $t\in[0,T]$, $[\xi]\in\Gh$ and $1\le m,k\le\dxi$, with the
constant $C_1'$ independent of $\xi$, $m,k$.
Now we recall that by Plancherel's equality, we have
$$
\|\partial_t u(t,\cdot)\|_{L^2}^2=\sumxi \dxi \|\partial_t \widehat{u}(t,\xi)\|_\HS^2=
\sumxi \dxi \sum_{m,k=1}^\dxi |\partial_t \widehat{u}(t,\xi)_{mk}|^2
$$
and 
$$
\|\Lcal u(t,\cdot)\|_{L^2}^2=\sumxi \dxi   \| \sigma_\Lcal(\xi) \widehat{u}(t,\xi)\|_\HS^2=
\sumxi \dxi \sum_{m,k=1}^\dxi  \nu_m^2(\xi) |\widehat{u}(t,\xi)_{mk}|^2.
$$
Hence, the estimate \eqref{case_1_est_mn} implies that
\begin{equation}
\label{case_1_last}
\|\Lcal u(t,\cdot)\|_{L^2}^2+\|\partial_t u(t,\cdot)\|_{L^2}^2\leq
C (\|\Lcal u_0\|_{L^2}^2+\|u_1\|_{L^2}^2),
\end{equation}
where the constant $C>0$ does not depend on $t\in[0,T]$. 
More generally, modulo analytic functions corresponding to trivial representations,
multiplying \eqref{case_1_est_mn}
by powers of $\nu_m(\xi)$, for any $s$, we get
\begin{multline}
\label{case_1_est_mn2}
\nu_m^{2+2s}(\xi) |\widehat{u}(t,\xi)_{mk}|^2+\nu_m^{2s}(\xi)  |\partial_t 
\widehat{u}(t,\xi)_{mk}|^2 \\
\le C_1'( \nu_m^{2+2s}(\xi) |\widehat{u}_0(\xi)_{mk}|^2+\nu_m^{2s}(\xi)|\widehat{u}_1(\xi)_{mk}|^2).
\end{multline}
Taking the sum over $\xi$, $m$ and $k$ as above, this yields
the estimate \eqref{case_1_last-est}.

If the vector fields $X_1,\ldots, X_k$ satisfy H\"ormander's condition of order $r$, the estimate
\eqref{case_1_last-SOB} follows from \eqref{case_1_last-est} and Proposition \ref{PROP:subL}.
Consequently, taking $\alpha$ arbitrarily large, we can also conclude that the solution 
$u$ belongs to $C^\infty(G)$ and by duality to $\D'(G)$ in $x$ if the initial data belong to 
$C^\infty(G)$ and $\D'(G)$, respectively. This completes the proof of Theorem \ref{theo_case_1}.

\medskip
Before proceeding to proving Cases 1--3,
we note that in $\Rn$, due to the necessity to introduce compactly supported cut-offs to explore
the finite propagation speed of the equation, one has to distinguish between 
the analytic case $s=1$ and Gevrey cases $s>1$. The case $s=1$ can be then handled by
using, e.g. Kajitani \cite{Kajitani:analytic-CPDE-1986}, see
also earlier results by Bony and Shapira \cite{Bony-Shapira:analytic-IM-1972}.
However, with our method of proof, it will not be necessary to make such a distinction
since the group $G$ is already compact.

\subsection{Case 2: Proof of Theorem \ref{theo_case_2}}
We assume now still $a(t)\ge a_0>0$ but this time the regularity of $a$ is reduced, i.e.,  $a\in C^\alpha([0,T])$, 
with $0<\alpha<1$. As above $a(t)\ge a_0>0$ for all $t\in[0,T]$. Keeping the notation \eqref{xi_l} and
inspired by \cite{GR:11} we look for a solution of the system \eqref{EQ:system}, i.e. of
\begin{equation}
\label{system_A}
\partial_t V(t,\xi)={\rm i}|\xi|_\nu A(t)V(t,\xi),
\end{equation}
of the following form
\[
V(t,\xi)={\rm e}^{-\rho(t)|\xi|_\nu^{1/s}}(\det H)^{-1}HW,
\]
where 
$\rho\in C^1([0,T])$ is a real-valued function which will be suitably chosen in the sequel, 
$W=W(t,\xi)$ is to be determined,
\[
H(t)=\left(
    \begin{array}{cc}
      1 & 1 \\
      \lambda_1(t) & \lambda_2(t)  
         \end{array}
  \right),
\]
and, for $\varphi\in C^\infty_c(\mathbb{R})$, $\varphi\ge 0$ with integral $1$,  
\begin{equation}
\label{def_lambdaj}
\begin{split}
\lambda_1(t)&=(-\sqrt{a}\ast\varphi_\epsilon)(t),\\
\lambda_2(t)&=(+\sqrt{a}\ast\varphi_\epsilon)(t),
\end{split}
\end{equation}
$\varphi_\epsilon(t)=\frac{1}{\epsilon}\varphi(t/\epsilon).$
By construction, $\lambda_1$ and $\lambda_2$ (where the dependence on $\epsilon$ is omitted for the sake of simplicity) are smooth in $t\in[0,T]$. Moreover,
\[
\lambda_2(t)-\lambda_1(t)\ge 2\sqrt{a_0},
\]
for all $t\in[0,T]$ and $\eps\in(0,1]$,
\[
|\lambda_1(t)+\sqrt{a(t)}|\le c_1\eps^\alpha
\]
and
\[
|\lambda_2(t)-\sqrt{a(t)}|\le c_2\eps^\alpha,
\]
uniformly in $t$ and $\eps$.
By substitution in \eqref{system_A} we get
\begin{multline*}
\esp^{-\rho(t)|\xi|_\nu^{\frac{1}{s}}}(\det H)^{-1}H\partial_tW+\esp^{-\rho(t)
|\xi|_\nu^{\frac{1}{s}}}(-\rho'(t)|\xi|_\nu^{\frac{1}{s}})(\det H)^{-1}HW-
\esp^{-\rho(t)|\xi|_\nu^{\frac{1}{s}}}\frac{\partial_t\det H}{(\det H)^2}HW\\
+\esp^{-\rho(t)|\xi|_\nu^{\frac{1}{s}}}(\det H)^{-1}(\partial_tH)W
={\rm i}|\xi|_\nu \esp^{-\rho(t)|\xi|_\nu^{\frac{1}{s}}}(\det H)^{-1}AHW.
\end{multline*}
Multiplying both sides of the previous equation by $\esp^{\rho(t)|\xi|_\nu^{\frac{1}{s}}}(\det H)H^{-1}$ we get
\[
\partial_tW-\rho'(t)|\xi|_\nu^{\frac{1}{s}}W-\frac{\partial_t\det H}{\det H}W + H^{-1}(\partial_t H)W= {\rm i}|\xi|_\nu H^{-1}AHW.
\]
Hence,
\begin{multline}
\label{energy}
\partial_t |W(t,\xi)|^2=2{\rm Re} (\partial_t W(t,\xi),W(t,\xi))\\
=2\rho'(t)|\xi|_\nu^{\frac{1}{s}}|W(t,\xi)|^2+2\frac{\partial_t\det H}{\det H}|W(t,\xi)|^2-2{\rm Re}(H^{-1}\partial_t HW,W)\\
-2|\xi|_\nu {\rm Im} (H^{-1}AHW,W).
\end{multline}
It follows that
\begin{multline}
\label{energy_case2}
\partial_t |W(t,\xi)|^2\le 2{\rm Re} (\partial_t W(t,\xi),W(t,\xi))\le 
2\rho'(t)|\xi|_\nu^{\frac{1}{s}}|W(t,\xi)|^2+2\biggl |\frac{\partial_t\det H}{\det H}\biggr||W(t,\xi)|^2\\
+2\Vert H^{-1}\partial_t H\Vert|W(t,\xi)^2|+|\xi|_\nu \Vert H^{-1}AH-(H^{-1}AH)^\ast\Vert |W(t,\xi)|^2.
\end{multline}

We proceed by estimating
\begin{enumerate}
\item $\frac{\partial_t\det H}{\det H}$,
\item $\Vert H^{-1}\partial_t H\Vert$,
\item $\Vert H^{-1}AH-(H^{-1}AH)^\ast\Vert$.
\end{enumerate} 

Note that the matrices $H$ and $A$ depend only on $t$ here. Hence, in complete analogy with \cite{GR:11} (Remark 21) we get that for all $T>0$ there exist constants $c_1,c_2,c_3>0$ such that
\begin{equation}
\label{case_2_1}
\biggl |\frac{\partial_t\det H}{\det H}\biggr |\le c_1\eps^{\alpha-1},
\end{equation}
\begin{equation}
\label{case_2_2}
\Vert H^{-1}\partial_t H\Vert\le c_2\eps^{\alpha-1},
\end{equation}
\begin{equation}
\label{case_2_3}
\Vert H^{-1}AH-(H^{-1}AH)^\ast\Vert\le c_3\eps^\alpha,
\end{equation}
for all $t\in[0,T]$ and $\eps\in(0,1]$. Hence, combining \eqref{case_2_1}, \eqref{case_2_2} and \eqref{case_2_3} with the energy \eqref{energy_case2} we obtain
\[
\partial_t |W(t,\xi)|^2\le (2\rho'(t)|\xi|_\nu ^{\frac{1}{s}}+c_1\eps^{\alpha-1}+c_2\eps^{\alpha-1}+c_3\eps^{\alpha}|\xi|_\nu )
|W(t,\xi)|^2.
\]
Since $|\xi|_\nu=0$ gives an analytic contribution in view of \eqref{EQ:nu-ests}, it is not restrictive to assume $|\xi|_\nu >0$. 
Hence, by setting $\eps:=|\xi|_\nu^{-1}$ we get
\[
\partial_t |W(t,\xi)|^2\le (2\rho'(t)|\xi|_\nu^{\frac{1}{s}}+c'_1 |\xi|_\nu ^{1-\alpha}+c'_3|\xi|_\nu^{1-\alpha})|W(t,\xi)|^2,
\]
Thus, it follows that for $|\xi|_\nu>0$ we can write, for some constant $C>0$, 
\[
\partial_t |W(t,\xi)|^2\le (2\rho'(t)|\xi|_\nu^{\frac{1}{s}}+C|\xi|_\nu^{1-\alpha})|W(t,\xi)|^2.
\]
At this point taking
\[
\frac{1}{s}> 1-\alpha
\]
and $\rho(t)=\rho(0)-\kappa t$ with $\kappa>0$ to be chosen later, for sufficiently large $|\xi|_\nu$ 
we conclude that 
\[
\partial_t|W(t,\xi)|^2\le 0,
\]
for $t\in[0,T]$ and, for example, without loss of generality, for $|\xi|_\nu \ge 1$. Passing now to $V$ we get
\begin{multline}
\label{last_estimate}
|V(t,\xi)|
=\esp^{-\rho(t)|\xi|_\nu^{\frac{1}{s}}}\frac{1}{\det H(t)}\Vert H(t)\Vert|W(t,\xi)| \\ \le 
\esp^{-\rho(t)|\xi|_\nu^{\frac{1}{s}}}\frac{1}{\det H(t)}\Vert H(t)\Vert|W(0,\xi)|=\\
\esp^{(-\rho(t)+\rho(0))|\xi|_\nu^{\frac{1}{s}}}\frac{\det H(0)}{\det H(t)}\Vert H(t)\Vert \Vert H^{-1}(0)\Vert |V(0,\xi)|,
\end{multline}
where
\[
\frac{\det H(0)}{\det H(t)}\Vert H(t)\Vert \Vert H^{-1}(0)\Vert\le c'.
\]
This is due to the fact that $\det H(t)$ is a bounded function with $\det H(t)=\lambda_2(t)-\lambda_1(t)\ge 2\sqrt{a_0}$ for all $t\in[0,T]$ and $\eps\in(0,1]$, $\Vert H(t)\Vert \le c$ and $\Vert H^{-1}(0)\Vert\le c$ for all $t\in[0,T]$ and $\eps\in(0,1]$. Concluding, there exists a constant $c'>0$ such that
\[
|V(t,\xi)|\le c'\esp^{(-\rho(t)+\rho(0))|\xi|_\nu ^{\frac{1}{s}}}|V(0,\xi)|,
\]
for all $|\xi|_\nu \ge 1$ and $t\in[0,T]$. It is now clear that choosing $\kappa>0$ small enough 
we have that if $|V(0,\xi)|\le c\,\esp^{-\delta|\xi|_\nu^{\frac{1}{s}}}$, $c,\delta>0$, 
the same kind of an estimate holds for $V(t,\xi)$. We finally go back to $\xi$ and $\widehat{v}(t,\xi)$. 
The previous arguments lead to
\[
|\xi|_\nu^2 |\widehat{v}(t,\xi)|^2+|\partial_t \widehat{v}(t,\xi)|^2
\le c'\esp^{(-\rho(t)+\rho(0))|\xi|_\nu^{\frac{1}{s}}}|\xi|_\nu ^2|\widehat{v_0}(\xi)|^2+ 
c'\esp^{(-\rho(t)+\rho(0))|\xi|_\nu^{\frac{1}{s}}}|\widehat{v_1}(\xi)|^2.
\]
Since 
the initial data are both in $\gamma^s_\Lcal(G)$ we obtain that
\begin{equation}
\label{fin_est_case_2}
|\xi|_\nu^2 |\widehat{v}(t,\xi)|^2+|\partial_t \widehat{v}(t,\xi)|^2
\le c'\esp^{
\kappa T|\xi|_\nu^{\frac{1}{s}}}(C_0\esp^{-A_0|\xi|_\nu^{\frac{1}{s}}}+C_1\esp^{-A_1|\xi|_\nu^{\frac{1}{s}}}),
\end{equation}
for suitable constants $C_0,C_1, A_0,A_1>0$ and $\kappa$ small enough, for $t\in[0,T]$ and all $|\xi|_\nu\ge 1$. 
The estimate \eqref{fin_est_case_2} implies that under the hypothesis of Case 2  and for 
\[
1\le s<1+\frac{\alpha}{1-\alpha},
\]
the solution $u$ belongs to $\gamma^s_\Lcal(G)$ in $x$ if the initial data are elements of $\gamma^s_\Lcal(G)$.

\subsection{Case 3: Proof of Theorem \ref{theo_case_3}} 
We now assume that $a(t)\ge 0$ is of class $C^\ell$ on $[0,T]$ with $\ell\ge 2$. 
Adopting the notations of the previous cases we want to study the well-posedness of the system
\eqref{EQ:system}:
it follows that the equation \eqref{eq_xi} can be written as the first order system
\[
\partial_t V(t,\xi)=i|\xi|_\nu A(t)V(t,\xi),
\]
where $V$ is the column vector with entries $V_1$ and $V_2$ and
\[
A(t)=\left(
    \begin{array}{cc}
      0 & 1\\
      a(t) & 0 \\
           \end{array}
  \right).
\]
The initial conditions are  
\[
V(0,\xi)=\left(
    \begin{array}{c}
      i|\xi|_\nu \widehat{v_0}(\xi)\\
      \widehat{v}_{1}(\xi)
     \end{array}
  \right).
\]
This kind of system and the corresponding second order equation have been studied on $\Rn$
in \cite{KS}  and \cite{GR:12} obtaining Gevrey well-posedness for $1\le s<1+\ell/2$ 
and well-posedness in every Gevrey class in case of smooth coefficients. 
The energy is given by a perturbation of the symmetriser, called \emph{quasi-symmetriser}. 
The quasi-symmetriser $Q^{(2)}_\eps$ of $A$ (see \cite{KS}) is defined as
\[
Q^{(2)}_{\eps}(t):=\left(
    \begin{array}{cc}
      2a(t) & 0\\
    0 & 2 \\
           \end{array}
  \right) + 2\eps^2 \left(
    \begin{array}{cc}
      1 & 0\\
      0 & 0 \\
           \end{array}
  \right).
\]
In the sequel we collect a few results which are proven in \cite{GR:12} and are essential for the energy 
estimates below. We refer to Proposition 1, Lemma 1, Lemma 2, and the proof in Section 4.1 in \cite{GR:12}.
\begin{prop}
\label{prop_quasi_sym}
There exists a constant $C_2>0$ such that
\[
C_2^{-1}\eps^{2}|V|^2\le (Q^{(2)}_\eps(t)V,V)\le C_2|V|^2,
\]
and
\[
|((Q_\eps^{(2)}A-A^\ast Q_\eps^{(2)})(t)V,V)|\le C_2\eps (Q_\eps^{2)}(t)V,V),
\]
for all $\eps\in(0,1)$, $t\in[0,T]$ and $V\in\C^2$. In addition the family of matrices $Q^{(2)}_\eps$ is nearly diagonal 
and there exists a constant $C>0$ such that
\[
\int_{0}^T\frac{|(\partial_tQ^{(2)}_\eps(t) V(t,\xi),V(t,\xi))|}{(Q^{(2)}_\eps(t) V(t,\xi), V(t,\xi))}\, dt\\
 \le C\eps^{-2/\ell},
\]
for all $\eps\in(0,1]$, $t\in[0,T]$ and all non-zero continuous functions $V:[0,T]\times\mathbb{R}^n\to \C$.
\end{prop}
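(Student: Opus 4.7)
The strategy is to verify assertions (1) and (2) by direct computation on the explicit diagonal matrix $Q_\eps^{(2)}(t)$, and to reduce the integral estimate in (3) to a pointwise Glaeser--type bound on $a'(t)/(a(t)+\eps^2)$. The near-diagonal property asserted in (3) is immediate, since $Q_\eps^{(2)}(t)$ is already strictly diagonal, so no genuine off-diagonal bookkeeping is required.

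For (1), expanding $(Q_\eps^{(2)}V,V) = (2a(t)+2\eps^2)|V_1|^2 + 2|V_2|^2$, the lower bound $C_2^{-1}\eps^2|V|^2$ follows from $a\ge 0$ and $\eps\in(0,1]$, while the upper bound uses the continuity (hence boundedness) of $a$ on $[0,T]$. For (2), a direct matrix computation gives
\[
Q_\eps^{(2)} A - A^\ast Q_\eps^{(2)} = \begin{pmatrix} 0 & 2\eps^2 \\ -2\eps^2 & 0 \end{pmatrix},
\]
so $|((Q_\eps^{(2)} A - A^\ast Q_\eps^{(2)})V,V)|\le 4\eps^2|V_1||V_2|$. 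Applying a weighted AM--GM inequality to the pair $\bigl(\sqrt{2(a+\eps^2)}\,|V_1|,\sqrt{2}\,|V_2|\bigr)$ whose squares sum to $(Q_\eps^{(2)}V,V)$, I obtain $4\eps^2|V_1||V_2|\le \frac{\eps^2}{\sqrt{a(t)+\eps^2}}\,(Q_\eps^{(2)}V,V)$, and since $\sqrt{a(t)+\eps^2}\ge \eps$ whenever $a\ge 0$, the prefactor is at most $\eps$, giving (2).

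For (3), a further direct computation gives $(\partial_tQ_\eps^{(2)}V,V) = 2a'(t)|V_1|^2$, while the already-established bound $(Q_\eps^{(2)}V,V)\ge 2(a(t)+\eps^2)|V_1|^2$ shows that the integrand is dominated, pointwise in $t$ and uniformly in $V$, by $|a'(t)|/(a(t)+\eps^2)$. The crucial analytic input is the Glaeser--Colombini--Jannelli--Spagnolo inequality: for $a\in C^\ell([0,T])$ with $\ell\ge 2$ and $a\ge 0$, there is a constant $C$ depending only on $\|a\|_{C^\ell([0,T])}$ such that $|a'(t)|\le C\,a(t)^{1-1/\ell}$. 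Combined with $a(t)\le a(t)+\eps^2$ this gives
\[
\frac{|a'(t)|}{a(t)+\eps^2}\le \frac{C}{(a(t)+\eps^2)^{1/\ell}}\le C\,\eps^{-2/\ell},
\]
and integrating over $[0,T]$ produces the claimed $\eps^{-2/\ell}$ bound.

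The main obstacle is the Glaeser type inequality $|a'(t)|\le C\,a(t)^{1-1/\ell}$: the case $\ell = 2$ is the classical Glaeser estimate, obtained from a Taylor expansion at points where $a$ is small together with $a''$ bounded, while $\ell > 2$ requires a more delicate inductive Taylor expansion at the zero set of $a$ and fully uses the $C^\ell$ regularity; this is exactly where the threshold $\eps^{-2/\ell}$ in (3) comes from. All remaining steps are purely algebraic, relying only on the explicit form of $Q_\eps^{(2)}$, the boundedness of $a$, and a weighted AM--GM inequality.
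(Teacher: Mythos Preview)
Your arguments for parts (1) and (2) are correct, and your reduction in part (3) to the scalar integral $\int_0^T |a'(t)|/(a(t)+\eps^2)\,dt$ is also fine. The paper itself does not prove this proposition; it quotes it from \cite{GR:12} and \cite{KS}, so there is no ``paper's proof'' to compare with, and the substantive question is whether your argument stands.

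The gap is in the pointwise Glaeser--type inequality you invoke for $\ell>2$. The bound $|a'(t)|\le C\,a(t)^{1-1/\ell}$ for nonnegative $a\in C^\ell$ is \emph{false} as soon as $\ell>2$. Take $a(t)=t^2$ on $[0,1]$: this function is $C^\infty$, hence $C^\ell$ for every $\ell$, but $|a'(t)|=2t$ while $a(t)^{1-1/\ell}=t^{2-2/\ell}$, and $2t/t^{2-2/\ell}=2t^{2/\ell-1}\to\infty$ as $t\to 0^+$ whenever $\ell>2$. The classical Glaeser inequality gives only $|a'|\le C\sqrt a$ from $a\in C^2$, $a\ge 0$, and this exponent does \emph{not} improve with further smoothness; the ``more delicate inductive Taylor expansion'' you allude to does not produce the stated pointwise bound.

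What is true, and what the references \cite{KS,GR:12} actually establish, is the \emph{integral} estimate
\[
\int_0^T \frac{|a'(t)|}{a(t)+\eps^2}\,dt \le C\,\eps^{-2/\ell},
\]
but its proof goes through a different mechanism (a lemma of Colombini--Jannelli--Spagnolo / Kinoshita--Spagnolo type bounding integrals of the form $\int |f'|\,(f^2+\eps^2)^{-1+1/(2\ell)}$ for $f\in C^\ell$, combined with the near-diagonality of $Q_\eps^{(2)}$), not through a pointwise control of $a'$ by a power of $a$. Your proof is complete for $\ell=2$; for $\ell>2$ you need to replace the pointwise step by the genuine integral lemma from those references.
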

Let us introduce the energy $E_\eps(t,\xi)=(Q^{(2)}_\eps(t) V(t,\xi),V(t,\xi))$. By direct computations as in \cite{GR:12} we get
\[
\partial_t E_\eps(t,\xi)=(\partial_t Q_\eps^{(2)}(t)V(t,\xi),V(t,\xi))+i|\xi|_\nu((Q_\eps^{(2)}A-A^\ast Q_\eps^{(2)})(t)V,V)
\]
and therefore by Gronwall lemma and Proposition \ref{prop_quasi_sym}, we get
\begin{equation}
\label{EE_case_3}
E_\eps(t,\xi)\le E_\eps(0,\xi){\rm e}^{c(\eps^{-2/\ell}+\eps|\xi|_\nu)},
\end{equation}
for some constant $c>0$, uniformly in $t$, $\xi$ and $\eps$. By setting $\eps^{-2/\ell}=\eps|\xi|_\nu$ we arrive at
\[
E_\eps(t,\xi)\le E_\eps(0,\xi)C_T{\rm e}^{C_T |\xi|_\nu^{\frac{1}{\sigma}}},
\]
with $\sigma=1+\frac{\ell}{2}$. An application of Proposition \ref{prop_quasi_sym} yields the estimate 
\[
C_2^{-1}\eps^2|V(t,\xi)|^2\le E_\eps(t,\xi)\le E_\eps(0,\xi)C_T{\rm e}^{C_T|\xi|_\nu^{\frac{1}{\sigma}}}
\le C_2|V(0,\xi)|^2C_T{\rm e}^{C_T|\xi|_\nu^{\frac{1}{\sigma}}}
\]
which implies
\[
|V(t,\xi)|\le C|\xi|_\nu^{\frac{k}{2\sigma}}{\rm e}^{C|\xi|_\nu^{\frac{1}{\sigma}}}|V(0,\xi)|,
\]
for some $C>0$, for all $t\in[0,T]$ and for all $\xi$.
We now go back to $\widehat{v}(t,\xi)=\widehat{u}(t,\xi)_{mk}$ to obtain
\[
|\widehat{u}(t,\xi)_{mk}|^2\le C^2 |\xi|_\nu^{\frac{k}{\sigma}}{\rm e}^{2C|\xi|_\nu^{\frac{1}{\sigma}}}
(|\xi|_\nu^2|\widehat{u}_0(\xi)_{mk}|^2+|\widehat{u}_1(\xi)_{mk}|^2)
\]
and recalling that $|\xi|_\nu=\nu_m(\xi)$ and summing over $1\le m,k\le\dxi$, we get
\begin{multline}\label{est_1}
\Vert \widehat{u}(t,\xi)\Vert_{\HS}^2
\le C^2\sum_{1\le m,k\le\dxi} \nu_m(\xi)^{\frac{k}{\sigma}+2}
{\rm e}^{2C \nu_m(\xi)^{\frac{1}{\sigma}}}|\widehat{u}_0(\xi)_{mk}|^2\\
+C^2\sum_{1\le m,k\le\dxi} \nu_m(\xi)^{\frac{k}{\sigma}}{\rm e}^{2C \nu_m(\xi)^{\frac{1}{\sigma}}}|\widehat{u}_1(\xi)_{mk}|^2.
\end{multline}
Recall that the initial data $u_0$ and $u_1$ are elements of $\gamma^s_\Lcal(G)$ and, therefore, 
there exist constants $A',C'>0$ such that
\begin{equation}
\label{est_2}
\Vert \esp^{A'\sigma_{-\Lcal}(\xi)^{\frac{1}{2s}}} \widehat{u}_0(\xi)\Vert_{\HS}\le C',\qquad 
\Vert \esp^{A'\sigma_{-\Lcal}(\xi)^{\frac{1}{2s}}}\widehat{u}_1(\xi)\Vert_{\HS}\le C'.
\end{equation}
Inserting \eqref{est_2} in \eqref{est_1}, taking $s<\sigma$  and $\jp{\xi}$ 
large enough we conclude that there exist constants $C^{''}>0$ such that
\[
\Vert \esp^{A'\sigma_{-\Lcal}(\xi)^{\frac{1}{2s}}} \widehat{u}(t,\xi)\Vert_{\HS}^2\le C^{''},
\]
for all $t\in[0,T]$. 
By Proposition \ref{PROP:subL} it follows that
$$
\sumxi \dxi \Vert \esp^{\frac{A'}{2}\sigma_{-\Lcal}(\xi)^{\frac{1}{2s}}} \widehat{u}(t,\xi)\Vert_{\HS}^2<\infty,
$$
i.e. $u$ belongs to $\gamma^s_\Lcal(G)$ in $x$ provided that
\[
1\le s<\sigma=1+\frac{\ell}{2}.
\]
 
\subsection{Case 4: Proof of Theorem \ref{theo_case_4}}

We finally assume $a(t)\ge 0$ and $a\in C^\alpha([0,T])$ with $0<\alpha<2$. 
The main difference with respect to Case 2 is that now the roots 
$\pm\sqrt{a(t)}$ can coincide and are not H\"older of order $\alpha$ but of order $\alpha/2$. 
For an easy adaptation of the proof of Case 2 in Theorem \ref{theo_case_2} 
we will equivalently assume that
$a\in C^{2\alpha}([0,T])$, $0<\alpha<1$ and that the roots are of class 
$C^\alpha$. 
We now indicate differences with the proof of Theorem \ref{theo_case_2}:
again we look for a solution of the system \eqref{system_A}
of the form
\[
V(t,\xi)={\rm e}^{-\rho(t)|\xi|_\nu^{\frac{1}{s}}}(\det H)^{-1}HW,
\]
where 
$\rho\in C^1([0,T])$ is a real valued function which will be suitably chosen in the sequel, 
\[
H(t)=\left(
    \begin{array}{cc}
      1 & 1 \\
      \lambda_1(t) & \lambda_2(t)  
         \end{array}
  \right)
\]
and, for $\varphi\in C^\infty_c(\mathbb{R})$, $\varphi\ge 0$ with integral $1$,  we set
\begin{equation}
\label{def_lambdaj_4}
\begin{split}
\lambda_1(t)&=(-\sqrt{a}\ast\varphi_\epsilon)(t)+\eps^\alpha,\\
\lambda_2(t)&=(+\sqrt{a}\ast\varphi_\epsilon)(t)+ 2\eps^\alpha.
\end{split}
\end{equation}
Note that $\lambda_1$ and $\lambda_2$ (where the dependence on $\epsilon$ is omitted for the sake of simplicity) are smooth in $t\in[0,T]$ and in addition the following properties hold:
\[
\lambda_2(t)-\lambda_1(t)\ge 2\eps^\alpha,
\]
for all $t\in[0,T]$ and $\eps\in(0,1]$,
\[
|\lambda_1(t)+\sqrt{a(t)}|\le c_1\eps^\alpha
\]
and
\[
|\lambda_2(t)-\sqrt{a(t)}|\le c_2\eps^\alpha,
\]
uniformly in $t$ and $\eps$. Arguing as in Case 2 we arrive at the energy estimate
\begin{multline}
\partial_t |W(t,\xi)|^2\le 2{\rm Re} (\partial_t W(t,\xi),W(t,\xi)) \\
\le 2\rho'(t)|\xi|_\nu^{\frac{1}{s}}
|W(t,\xi)|^2+2\biggl |\frac{\partial_t\det H}{\det H}\biggr||W(t,\xi)|^2\\
+2\Vert H^{-1}\partial_t H\Vert|W(t,\xi)^2|+|\xi|_\nu\Vert H^{-1}AH-(H^{-1}AH)^\ast\Vert |W(t,\xi)|^2.
\end{multline}

We proceed by estimating
\begin{enumerate}
\item $\frac{\partial_t\det H}{\det H}$,
\item $\Vert H^{-1}\partial_t H\Vert$,
\item $\Vert H^{-1}AH-(H^{-1}AH)^\ast\Vert$.
\end{enumerate} 

In analogy with \cite{GR:11} (Subsections 4.1, 4.2, 4.3) 
we get that for all $T>0$ there exist constants $c_1,c_2,c_3>0$ such that
\begin{equation}
\label{case_4_1}
\biggl |\frac{\partial_t\det H}{\det H}\biggr |\le c_1\eps^{1},
\end{equation}
\begin{equation}
\label{case_4_2}
\Vert H^{-1}\partial_t H\Vert\le c_2\eps^{-1},
\end{equation}
\begin{equation}
\label{case_4_3}
\Vert H^{-1}AH-(H^{-1}AH)^\ast\Vert\le c_3\eps^\alpha,
\end{equation}
for all $t\in[0,T]$ and $\eps\in(0,1]$. Hence, combining \eqref{case_4_1}, \eqref{case_4_2} and \eqref{case_4_3} with the previous energy we obtain
\[
\partial_t |W(t,\xi)|^2\le (2\rho'(t)|\xi|_\nu^{\frac{1}{s}}+c_1\eps^{-1}+c_2\eps^{-1}+c_3\eps^{\alpha}|\xi|_\nu)|W(t,\xi)|^2.
\]
Again, it is not restrictive to assume that $|\xi|_\nu>0$. By setting $\eps:=|\xi|_\nu^{-\gamma}$ with
\[
\gamma=\frac{1}{1+\alpha}
\]
we get
\begin{multline*}
\partial_t |W(t,\xi)|^2\le (2\rho'(t)|\xi|_\nu^{\frac{1}{s}}+c'_1|\xi|_\nu^{\gamma}+c'_3|\xi|_\nu^{1-\gamma\alpha})|W(t,\xi)|^2
\\
\le (2\rho'(t)|\xi|_\nu^{\frac{1}{s}}+C|\xi|_\nu^{1/(1+\alpha)})|W(t,\xi)|^2.
\end{multline*}
At this point taking
\[
\frac{1}{s}> \frac{1}{1+\alpha}
\]
and $\rho(t)=\rho(0)-\kappa t$ with $\kappa>0$ to be chosen later, we conclude that 
\[
\partial_t|W(t,\xi)|^2\le 0,
\]
for $t\in[0,T]$ and $|\xi|_\nu\ge 1$. Passing now to $V$ and by the same arguments of Case 2 with
\[
\frac{\det H(0)}{\det H(t)}\Vert H(t)\Vert \Vert H^{-1}(0)\Vert\le c\,\eps^{-\alpha}=c\,|\xi|_\nu^{\gamma\alpha}
=c\,|\xi|_\nu^{\frac{\alpha}{1+\alpha}}
\]
we conclude that there exists a constant $c'>0$ such that
\[
|V(t,\xi)|\le c'|\xi|_\nu^{\frac{\alpha}{1+\alpha}}\esp^{(-\rho(t)+\rho(0))|\xi|_\nu^{\frac{1}{s}}}|V(0,\xi)|,
\]
for all $|\xi|_\nu\ge 1$ and $t\in[0,T]$. 
We finally go back to $\widehat{v}(t,\xi)$ and $\widehat{u}(t,\xi)_{mk}$. We have
\[
\nu_m^2(\xi)|\widehat{u}(t,\xi)_{mk}|^2\le c'\esp^{(-\rho(t)+\rho(0))\nu_m(\xi)^{\frac{1}{s}}}
(\nu_m(\xi)^2
|\widehat{u}_0(\xi)_{mk}|^2+ |\widehat{u}_1(\xi)_{mk}|^2),
\]
with the constant $c'$ independent of $\xi$, $m$ and $k$.
Multiplying by $\esp^{\delta\nu_m(\xi)^{\frac{1}{s}}}$ and 
summing over  $1\leq m,k\leq \dxi$, we get
\begin{multline}\label{EQ:est-HS}
\|\esp^{\delta(\sigma_{-\Lcal}(\xi))^{\frac{1}{2s}}}\sigma_{-\Lcal}(\xi) \widehat{u}(t,\xi)\|_{\HS}^2 \\
\leq 
c' (\|\esp^{(-\rho(t)+\rho(0)+\delta)(\sigma_{-\Lcal}(\xi))^{\frac{1}{2s}}}
\sigma_{-\Lcal}(\xi) \widehat{u}_0(\xi)\|_{\HS}^2
+\|\esp^{(-\rho(t)+\rho(0)+\delta)(\sigma_{-\Lcal}(\xi))^{\frac{1}{2s}}} \widehat{u}_1(\xi)\|_{\HS}^2),
\end{multline}
for any $\delta>0$.
Since the initial data are both in $\gamma^s_\Lcal(G)$, we get that 
$$
\sumxi\dxi (
\|\esp^{(\kappa T+\delta)(\sigma_{-\Lcal}(\xi))^{\frac{1}{2s}}}\sigma_{-\Lcal}(\xi) \widehat{u}_0(\xi)\|_{\HS}^2
+\|\esp^{(\kappa T+\delta)(\sigma_{-\Lcal}(\xi))^{\frac{1}{2s}}} \widehat{u}_1(\xi)\|_{\HS}^2)<\infty
$$
for some $\delta>0$ if $\kappa$ is small enough.
Taking the same sum $\sumxi\dxi$ of the expressions in \eqref{EQ:est-HS}, 
and using Plancherel's formula,  we obtain that
\begin{equation}
\label{fin_est_case_4}
\| \esp^{\delta(-\Lcal)^{\frac{1}{2s}}}\Lcal u(t,\cdot)\|_{L^2}^2=
\sumxi\dxi \|\esp^{\delta(\sigma_{-\Lcal}(\xi))^{\frac{1}{2s}}}\sigma_{-\Lcal}(\xi) \widehat{u}(t,\xi)\|_{\HS}^2
<\infty,
\end{equation}
for $\kappa$ small enough, for $t\in[0,T]$. This completes
the proof of Theorem \ref{theo_case_4}.


\begin{thebibliography}{CDGS79}

\bibitem[BP11]{Berti-Procesi:Nonlinear-wave-Sch-DMJ-2011}
M.~Berti and M.~Procesi.
\newblock Nonlinear wave and {S}chr{\"o}dinger equations on compact {L}ie
  groups and homogeneous spaces.
\newblock {\em Duke Math. J.}, 159(3):479--538, 2011.

\bibitem[Bro80]{Bronshtein:TMMO-1980}
M.~D. Bron{\v{s}}te{\u\i}n.
\newblock The {C}auchy problem for hyperbolic operators with characteristics of
  variable multiplicity.
\newblock {\em Trudy Moskov. Mat. Obshch.}, 41:83--99, 1980.

\bibitem[BS72]{Bony-Shapira:analytic-IM-1972}
J.-M. Bony and P.~Schapira.
\newblock Existence et prolongement des solutions holomorphes des {\'e}quations
  aux d{\'e}riv{\'e}es partielles.
\newblock {\em Invent. Math.}, 17:95--105, 1972.

\bibitem[CDGS79]{Colombini-deGiordi-Spagnolo-Pisa-1979}
F.~Colombini, E.~De~Giorgi, and S.~Spagnolo.
\newblock Sur les {\'e}quations hyperboliques avec des coefficients qui ne
  d{\'e}pendent que du temps.
\newblock {\em Ann. Scuola Norm. Sup. Pisa Cl. Sci. (4)}, 6(3):511--559, 1979.

\bibitem[CFS10]{Chen-Fan-Sung:WE-cpt-Lie-gps-JFA-2010}
J.~Chen, D.~Fan, and L.~Sun.
\newblock Hardy space estimates for the wave equation on compact {L}ie groups.
\newblock {\em J. Funct. Anal.}, 259(12):3230--3264, 2010.

\bibitem[CJS87]{Colombini-Jannelli-Spagnolo:Annals-low-reg}
F.~Colombini, E.~Jannelli, and S.~Spagnolo.
\newblock Nonuniqueness in hyperbolic {C}auchy problems.
\newblock {\em Ann. of Math. (2)}, 126(3):495--524, 1987.

\bibitem[CS82]{Colombini-Spagnolo:Acta-ex-weakly-hyp}
F.~Colombini and S.~Spagnolo.
\newblock An example of a weakly hyperbolic {C}auchy problem not well posed in
  {$C^{\infty }$}.
\newblock {\em Acta Math.}, 148:243--253, 1982.

\bibitem[DR14]{Dasgupta-Ruzhansky:Gevrey-arxiv}
A.~Dasgupta and M.~Ruzhansky.
\newblock Gevrey functions and ultradistributions on compact {L}ie groups and
  homogeneous spaces.
\newblock {\em Bull. Sci. Math., http://dx.doi.org/10.1016/j.bulsci.2013.12.001
  (arXiv:1208.1883)}, 2014.

\bibitem[Feg91]{Fegan:bk-compact-Lie-groups}
H.~D. Fegan.
\newblock {\em Introduction to compact {L}ie groups}, volume~13 of {\em Series
  in Pure Mathematics}.
\newblock World Scientific Publishing Co. Inc., River Edge, NJ, 1991.

\bibitem[FMV06]{FMV}
G.~Furioli, C.~Melzi, and A.~Veneruso.
\newblock Littlewood-{P}aley decompositions and {B}esov spaces on {L}ie groups
  of polynomial growth.
\newblock {\em Math. Nachr.}, 279(9-10):1028--1040, 2006.

\bibitem[GHK02]{Greiner-et-al:WE-second-order-DMJ-2002}
P.~C. Greiner, D.~Holcman, and Y.~Kannai.
\newblock Wave kernels related to second-order operators.
\newblock {\em Duke Math. J.}, 114(2):329--386, 2002.

\bibitem[GR12]{GR:11}
C.~Garetto and M.~Ruzhansky.
\newblock On the well-posedness of weakly hyperbolic equations with
  time-dependent coefficients.
\newblock {\em J. Differential Equations}, 253(5):1317--1340, 2012.

\bibitem[GR13]{GR:12}
C.~Garetto and M.~Ruzhansky.
\newblock Weakly hyperbolic equations with non-analytic coefficients and lower
  order terms.
\newblock {\em Math. Ann.}, 357(2):401--440, 2013.

\bibitem[Hel84]{Helgason:wave-eqns-hom-spaces-1984}
S.~Helgason.
\newblock Wave equations on homogeneous spaces.
\newblock In {\em Lie group representations, {III} ({C}ollege {P}ark, {M}d.,
  1982/1983)}, volume 1077 of {\em Lecture Notes in Math.}, pages 254--287.
  Springer, Berlin, 1984.

\bibitem[H{\"o}r67a]{hormander_67}
L.~H{\"o}rmander.
\newblock Hypoelliptic second order differential equations.
\newblock {\em Acta Math.}, 119:147--171, 1967.

\bibitem[H{\"{o}}r67b]{Hormander:hypoellipticity-1967}
L.~H{\"{o}}rmander.
\newblock Pseudo-differential operators and hypoelliptic equations.
\newblock In {\em Singular integrals ({P}roc. {S}ympos. {P}ure {M}ath., {V}ol.
  {X}, {C}hicago, {I}ll., 1966)}, pages 138--183. Amer. Math. Soc., Providence,
  R.I., 1967.

\bibitem[Jan84]{Jannelli:JMKU-1984}
E.~Jannelli.
\newblock Gevrey well-posedness for a class of weakly hyperbolic equations.
\newblock {\em J. Math. Kyoto Univ.}, 24(4):763--778, 1984.

\bibitem[Kaj86]{Kajitani:analytic-CPDE-1986}
K.~Kajitani.
\newblock Global real analytic solutions of the {C}auchy problem for linear
  partial differential equations.
\newblock {\em Comm. Partial Differential Equations}, 11(13):1489--1513, 1986.

\bibitem[Kna02]{Knapp:bk-Lie-groups-beyond-intro}
A.~W. Knapp.
\newblock {\em Lie groups beyond an introduction}, volume 140 of {\em Progress
  in Mathematics}.
\newblock Birkh{\"a}user Boston Inc., Boston, MA, second edition, 2002.

\bibitem[KS06]{KS}
T.~Kinoshita and S.~Spagnolo.
\newblock Hyperbolic equations with non-analytic coefficients.
\newblock {\em Math. Ann.}, 336(3):551--569, 2006.

\bibitem[Mel86]{Melrose:wave-subelliptic-1986}
R.~Melrose.
\newblock Propagation for the wave group of a positive subelliptic second-order
  differential operator.
\newblock In {\em Hyperbolic equations and related topics ({K}atata/{K}yoto,
  1984)}, pages 181--192. Academic Press, Boston, MA, 1986.

\bibitem[MS99]{Muller-Stein:Lp-wave-Heis}
D.~M{{\"u}}ller and E.~M. Stein.
\newblock {$L^p$}-estimates for the wave equation on the {H}eisenberg group.
\newblock {\em Rev. Mat. Iberoamericana}, 15(2):297--334, 1999.

\bibitem[Nac82]{Nachman:wave-Heisenberg-CPDE-1982}
A.~I. Nachman.
\newblock The wave equation on the {H}eisenberg group.
\newblock {\em Comm. Partial Differential Equations}, 7(6):675--714, 1982.

\bibitem[Nis83]{Nishitani:BSM-1983}
T.~Nishitani.
\newblock Sur les {\'e}quations hyperboliques {\`a} coefficients
  h{\"o}ld{\'e}riens en {$t$} et de classe de {G}evrey en {$x$}.
\newblock {\em Bull. Sci. Math. (2)}, 107(2):113--138, 1983.

\bibitem[OR73]{Oleinik-Radkevich:BK-1973}
O.~A. Ole{\u\i}nik and E.~V. Radkevi{\v{c}}.
\newblock {\em Second order equations with nonnegative characteristic form}.
\newblock Plenum Press, New York, 1973.
\newblock Translated from the Russian by Paul C. Fife.

\bibitem[RS76]{Rothschild-Stein:AM-1976}
L.~P. Rothschild and E.~M. Stein.
\newblock Hypoelliptic differential operators and nilpotent groups.
\newblock {\em Acta Math.}, 137(3-4):247--320, 1976.

\bibitem[RT10]{Ruzhansky-Turunen:BOOK}
M.~Ruzhansky and V.~Turunen.
\newblock {\em Pseudo-differential operators and symmetries. Background
  analysis and advanced topics}, volume~2 of {\em Pseudo-Differential
  Operators. Theory and Applications}.
\newblock Birkh{\"a}user Verlag, Basel, 2010.

\bibitem[RT11]{Ruzhansky-Turunen:JFA-Garding}
M.~Ruzhansky and V.~Turunen.
\newblock Sharp {G}\aa rding inequality on compact {L}ie groups.
\newblock {\em J. Funct. Anal.}, 260(10):2881--2901, 2011.

\bibitem[RT13]{Ruzhansky-Turunen:IMRN}
M.~Ruzhansky and V.~Turunen.
\newblock Global quantization of pseudo-differential operators on compact {L}ie
  groups, {$\rm SU(2)$}, 3-sphere, and homogeneous spaces.
\newblock {\em Int. Math. Res. Not. IMRN}, (11):2439--2496, 2013.

\bibitem[SSS91]{SSS:1991}
A.~Seeger, C.~D. Sogge, and E.~M. Stein.
\newblock Regularity properties of {F}ourier integral operators.
\newblock {\em Ann. of Math. (2)}, 134(2):231--251, 1991.

\bibitem[Ste70]{Stein:BOOK-topics-Littlewood-Paley}
E.~M. Stein.
\newblock {\em Topics in harmonic analysis related to the {L}ittlewood-{P}aley
  theory.}
\newblock Annals of Mathematics Studies, No. 63. Princeton University Press,
  Princeton, N.J., 1970.

\bibitem[Tay86]{Taylor:BK-Noncomm-harmonic-analysis}
M.~E. Taylor.
\newblock {\em Noncommutative harmonic analysis}, volume~22 of {\em
  Mathematical Surveys and Monographs}.
\newblock American Mathematical Society, Providence, RI, 1986.

\bibitem[Wey27]{Weyl:origins}
H.~Weyl.
\newblock Quantenmechanik und gruppentheorie.
\newblock {\em Zeitschrift f{\"u}r Physik}, 46(1-2):1--46, 1927.

\end{thebibliography}

\end{document}